\newtheorem{theorem}{Theorem}[section]
\newtheorem{lemma}[theorem]{Lemma}
\newtheorem{definition}[theorem]{Definition}
\newtheorem{proposition}[theorem]{Proposition}
\newtheorem{corollary}[theorem]{Corollary}
\newtheorem{example}[theorem]{Example}
\newtheorem{remark}[theorem]{Remark}
\theoremstyle{definition}
\newcommand\pf{\begin{proof}}
\newcommand\epf{\end{proof}}
\DeclareMathOperator{\id}{id}
\numberwithin{equation}{section}
\title{Quotients and Hopf images of a smash coproduct}
\author{Julien Bichon}
\address{Laboratoire de Math\'ematiques,
Universit\'e Blaise Pascal,
Complexe universitaire des C\'ezeaux,
3 place Vasarely, 
63178~Aubi\`ere Cedex, France}
\email{Julien.Bichon@math.univ-bpclermont.fr}
\subjclass[2010]{16T05}
\begin{document}

\begin{abstract}
We describe the Hopf algebra quotients and Hopf images of the smash coproduct of a group algebra by the algebra of functions on a finite group. %These results are used to generalize some of our previous contributions to the study of quantum invariants of Hadamard subfactors associated to deformed Fourier matrices.
\end{abstract}

\maketitle

\section{Introduction}

The smash coproduct, associated to an action of a finite group on a discrete group, is one of the most well-known constructions to produce non-commutative and non-cocommutative Hopf algebras.
The aim of this paper is to provide a description of the Hopf algebra quotients of such a smash coproduct.

Let us first recall  the construction.
Let $H\curvearrowright\Gamma$ be a finite group $H$ acting by automorphisms on a discrete group $\Gamma$. Then
the smash  coproduct Hopf algebra $k[\Gamma] \rtimes k^H$ ($k$ denotes an arbitrary field) is 
$k[\Gamma]\otimes k^H$ as an algebra, where $k[\Gamma]$ denotes the (convolution) group algebra of $\Gamma$ and $k^H$ is the algebra of $k$-valued functions of $H$, and the  comultiplication is  given by 
$$\Delta(r\#\delta_h)=\sum_{l\in H}r\#\delta_{l}\otimes l^{-1}.r  \#\delta_{l^{-1}h}
= \sum_{l\in H}r\#\delta_{l^{-1}}\otimes l.r  \#\delta_{lh}$$ for $r\in\Gamma,h \in H$ (we denote by $r\#\delta_h$ the element $r \otimes \delta_h$ of  $k[\Gamma] \rtimes k^H$).
The Hopf algebra  $k[\Gamma] \rtimes k^H$  fits into an exact sequence of Hopf algebras (see \cite{ade})
$$k \rightarrow k^H  \rightarrow k[\Gamma] \rtimes k^H \rightarrow k[\Gamma] \rightarrow k$$ 
Now if $L$ is Hopf algebra quotient of  $k[\Gamma] \rtimes k^H$, some standard arguments show that $L$ fits into an exact sequence 
 $$k \rightarrow k^{G}  \rightarrow L \rightarrow k[\overline{\Gamma}] \rightarrow k$$ 
where $G\subset H$ is a subgroup  and $\overline{\Gamma}$ is a quotient of $\Gamma$. Moreover, this exact sequence is cleft, so the general theory of cleft extensions (see \cite{ade,and96,mas}) ensures that $L$ is isomorphic to a general bismash product $k^G{^{\tau}\!}\#_{\sigma}k[\overline{\Gamma}]$, involving complicated cohomological data, that are known to be difficult to deal with in general (see \cite{mas95} for an illustration of a situation where it is better to forget about the whole structure of the bismash product).   

Instead of a bismash product, we propose to use the notion of  quotient datum to describe the quotients of $k[\Gamma] \rtimes k^H$: a quotient datum  is a triple
 $(G,N,\Phi)$ where $G$ is a subgroup of $H$,
$N \triangleleft \Gamma$ is a normal and $G$-stable subgroup of $\Gamma$, and $\Phi : N \rightarrow (k^G)^{\times}$ is a group morphism satisfying some simple conditions.
To a quotient datum $(G,N,\Phi)$ we associate a Hopf algebra $k[\Gamma/N] \rtimes_\Phi k^G$, which is a quotient of $k[\Gamma] \rtimes k^H$, and show conversely that any Hopf algebra quotient of $k[\Gamma] \rtimes k^H$ is isomorphic to $k[\Gamma/N] \rtimes_\Phi k^G$ for some quotient datum $(G,N,\Phi)$.

It seems that the notion is simple enough to allow concrete description of the quotients of $k[\Gamma] \rtimes k^H$, at least of course when the normal subgroup structure of $\Gamma$ is not too complicated, and we examine some examples to illustrate this.

The original motivation for this work came from the following problem.

 First recall \cite{bb2}
that for a Hopf algebra representation $\pi : A \rightarrow {\rm End}(V)$ on a vector space $V$, there exists a unique Hopf algebra $L$, called the Hopf image of $\pi$, that produces a minimal factorization 
$$\xymatrix{A\ar[rr]^{\pi}\ar[rd]&&{\rm End}(V)\\&L\ar[ur]&}$$
When $A=k[\Gamma]$ is a group algebra, then $L=k[\Gamma/{\rm Ker}(\pi)]$, and hence the problem of computing the Hopf image amounts to computing the kernel of the group representation, which of course can be quite difficult.
Techniques for computing Hopf images for several classes of Hopf algebras were developed in \cite{bb2}.

Now recall \cite{bni,ban} that to a complex  Hadamard matrix  $H \in M_N(\mathbb C)$ is associated a representation $\pi_H : A_s(N) \rightarrow M_N(\mathbb C)$ of Wang's quantum permutation algebra $A_s(N)$ \cite{wan} (the universal cosemisimple Hopf algebra coacting on the diagonal algebra $k^N$ when $k$ has characteristic zero \cite{bic}), whose Hopf image is thought of as representing the quantum symmetry group of the Hadamard matrix or of the corresponding subfactor (see \cite{jsu}). It is in general very difficult to compute the Hopf image of $\pi_H$. The case $H=F_M\otimes_QF_N$ of the tensor product of Fourier matrices deformed by a matrix of coefficients $Q$ (\cite{dit}) was studied in \cite{bb3}, and a factorization of $\pi_H$ through a certain smash coproduct 
$\mathbb C[\Gamma] \rtimes \mathbb C^{{\mathbb Z}_M}$ was found there, which was shown to be the Hopf image under a genericity assumption on $Q$. However the general case remained unclear, and after analysing the situation, it became clear that it was in fact not more difficult to try to describe all the possible quotients of the crossed coproduct and only after that, try to identify the Hopf image. From these considerations we get a method to compute the Hopf image of a smash coproduct in general, described in Section 4, that enables us to make more precise some of the results of \cite{bb3} in special situations. In particular we show that if $M=2$ and $N$ is prime, or $N=2$ and $M$ is prime, the genericity assumption in \cite{bb3} can be weakened to the assumption that one of the coefficients of the parameter matrix $Q$ is not a root of unity. 

The paper is organized as follows. In Section \ref{sec:quotdata}  we define quotient data and  describe the Hopf algebra quotients of the smash coproduct  of a group algebra by the algebra of functions on a finite group in terms of Hopf algebras associated to quotient data. In Section \ref{sec:example} we discuss some examples. In Section \ref{sec:hopfim}, after having recalled the basic notions around Hopf images, we provide a general method, based on the previous considerations, to compute Hopf images for smash coproducts. The final Section \ref{sec:exhopim} is devoted to examples of computations of Hopf images, providing in particular cases  refinements of some results of \cite{bb3}.

\smallskip

\textbf{Notations and conventions.}
We work over an arbitrary field $k$. 
We assume that the reader is familiar with the basic theory of Hopf algebras, see  \cite{mon} for example.
If $A$ is a Hopf algebra, as usual, $\Delta$, $\varepsilon$ and $S$ stand respectively for the comultiplication, counit and antipode of $A$. If $\Gamma$ is a group, we denote by $k[\Gamma]$ the (convolution) group algebra having its group-like elements identified with the elements of $\Gamma$, and if $H$ is a finite group, we denote by $k^H$ the Hopf algebra of functions on $H$, i.e. $k^H=k[H]^*$ as Hopf algebras, see e.g. Chapter 1 in \cite{mon}. 

\smallskip

\textbf{Acknowledgements.} This paper is a continuation of a long collaboration with Teodor Banica on the topics of Section 5. I would like to thank him for many interesting discussions.

\section{Quotient data}\label{sec:quotdata}

Let $H\curvearrowright\Gamma$ be a finite group $H$ acting by automorphisms on a discrete group $\Gamma$. Recall that
the smash  coproduct Hopf algebra is $k[\Gamma] \rtimes k^H=k[\Gamma]\otimes k^H$ as an algebra, with comultiplication, counit and antipode given by 
$$\Delta(r\#\delta_h)=\sum_{l\in H}r\#\delta_{l}\otimes l^{-1}.r  \#\delta_{l^{-1}h}
= \sum_{l\in H}r\#\delta_{l^{-1}}\otimes l.r  \#\delta_{lh}$$
$$\varepsilon(r\#\delta_h) = \delta_{h,1}, \quad S(r\#\delta_h)= h^{-1}.r^{-1}\#
\delta_{h^{-1}}$$
 for $r\in\Gamma,h \in H$.

The precise definition of a quotient datum for $H\curvearrowright\Gamma$ is as follows.

\begin{definition}
 Let $H\curvearrowright\Gamma$ as above. A quotient datum for $H\curvearrowright\Gamma$ is a triple $(G,N,\Phi)$ where
\begin{enumerate}
\item $G \subset H$ is a subgroup.
 \item $N \triangleleft \Gamma$ is a normal and $G$-stable subgroup of $\Gamma$.
\item $\Phi : N \rightarrow (k^G)^{\times}$ is a group morphism such that $$\Phi(r)(lh)=\Phi(l^{-1}.r)(h)\Phi(r)(l), \quad
\Phi(r)= \Phi(srs^{-1})$$   for any $ r \in N$, $s \in \Gamma$, $h,l \in G$.
\end{enumerate}
We denote by ${\rm QD}(H\curvearrowright\Gamma)$ the set of quotient data for $H\curvearrowright\Gamma$.
\end{definition}

\begin{example}\label{ex:char}
If $G\subset H$ is a subgroup, $N \triangleleft \Gamma$ is a normal and $G$-stable subgroup of $\Gamma$ and $\Phi : N \rightarrow \widehat{G}={\rm Hom}(G,k^\times)$
is a group morphism such that $\Phi(r)= \Phi(srs^{-1})$ and $\Phi(h.r)=\Phi(r)$ for any $ r \in N$, $s \in \Gamma$,
$h \in G$, then $(G,N,\Phi) \in {\rm QD}(H\curvearrowright\Gamma)$.
\end{example}

See the end of the next section for an example of a quotient datum that is not of the type of the previous example.

The proof of the following easy lemma, that we record for future use, is left to the reader.

\begin{lemma}\label{qdeasy}
 Let $H\curvearrowright\Gamma$ as above and let $(G,N,\Phi)\in {\rm QD}(H\curvearrowright\Gamma)$.
\begin{enumerate}
 \item  For $r, s \in \Gamma$ with $rs \in N$, we have $sr \in N$ and $\Phi(rs)=\Phi(sr)$.
\item For $h \in G$ and $r \in N$,
 we have $\Phi(r)(1)=1$ and $\Phi(h. r)(h)= \Phi(r^{-1})(h^{-1})$.
\end{enumerate}
\end{lemma}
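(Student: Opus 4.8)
Both assertions are formal consequences of the axioms in the definition of a quotient datum, so the plan is simply to feed the right substitutions into the conjugation-invariance identity $\Phi(r)=\Phi(srs^{-1})$ and the cocycle identity $\Phi(r)(lh)=\Phi(l^{-1}.r)(h)\Phi(r)(l)$, while exploiting that each $\Phi(r)$ lies in $(k^G)^{\times}$ and is therefore pointwise invertible.

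For part (1), I would first use that $N$ is normal in $\Gamma$: writing $sr=s(rs)s^{-1}$ and using $rs\in N$ together with normality gives $sr\in N$. Then the conjugation-invariance identity, applied to the element $rs\in N$ and the group element $s\in\Gamma$, yields $\Phi(rs)=\Phi(s(rs)s^{-1})=\Phi(sr)$, which is exactly the claim.

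For part (2), I would establish $\Phi(r)(1)=1$ first, by setting $l=h=1$ in the cocycle identity: this gives $\Phi(r)(1)=\Phi(r)(1)\Phi(r)(1)$, and since $\Phi(r)(1)\in k^{\times}$ is invertible we may cancel to obtain $\Phi(r)(1)=1$. For the remaining identity, I would substitute $l=h_0$ and $h=h_0^{-1}$ (both in the subgroup $G$) into the cocycle identity, so that the left-hand side becomes $\Phi(r)(1)=1$ while the right-hand side becomes $\Phi(h_0^{-1}.r)(h_0^{-1})\Phi(r)(h_0)$; hence $\Phi(h_0^{-1}.r)(h_0^{-1})=\Phi(r)(h_0)^{-1}$. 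Taking $h_0=h^{-1}$ and using that $\Phi$ is a group morphism, so that $\Phi(r^{-1})=\Phi(r)^{-1}$ pointwise and thus $\Phi(r)(h^{-1})^{-1}=\Phi(r^{-1})(h^{-1})$, yields $\Phi(h.r)(h)=\Phi(r^{-1})(h^{-1})$.

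There is no genuine obstacle here, as the computation is routine. The only points requiring a little care are the bookkeeping in the cocycle identity, namely making sure the inserted elements $h_0,h_0^{-1}$ really lie in $G$ (which holds because $G$ is a subgroup), and the repeated use of the invertibility of $\Phi(r)$ in $k^G$, which is precisely what legitimizes the cancellations.
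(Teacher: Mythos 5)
Your proof is correct: part (1) follows exactly as you say from normality plus the conjugation-invariance axiom, and your substitutions $l=h=1$ and then $l=h^{-1},\,h=h$ into the cocycle identity, combined with pointwise invertibility of $\Phi(r)\in (k^G)^{\times}$ and the morphism property $\Phi(r^{-1})=\Phi(r)^{-1}$, give part (2). The paper explicitly leaves this lemma's proof to the reader, and your argument is precisely the routine verification the author intended.
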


We now associate a quotient Hopf algebra of $k[\Gamma] \rtimes k^H$ to a quotient datum for $H \curvearrowright \Gamma$.

\begin{proposition}\label{construction}
  Let $H\curvearrowright\Gamma$ as above and let $(G,N,\Phi)\in {\rm QD}(H\curvearrowright\Gamma)$.
Choose a section $j : \Gamma/N \rightarrow \Gamma$ of the canonical projection $u: \Gamma \rightarrow \Gamma/N$, with $ju(1)=1$.
The following formulas 
$$(u(r)\#\delta_h)(u(s)\#\delta_k)= u(rs)\#\delta_h\delta_k \Phi(ju(r)ju(s)ju(rs)^{-1})$$
$$\Delta(u(r)\#\delta_h)= \sum_{l\in G}\left(u(r)\#\delta_{l^{-1}}\right) \otimes u(l.r)\#
\delta_{lh}\Phi(l.ju(r) ju(l. r)^{-1})$$
$$S(u(r)\#\delta_h)= u(h^{-1}.r^{-1})\#
\Phi(ju(h^{-1}.r^{-1})^{-1}h^{-1}.ju(r)^{-1})\delta_{h^{-1}}$$
together with the obvious unit and counit define a Hopf algebra structure on $k[\Gamma/N] \otimes k^G$, which, up to isomorphism, does not depend on the choice of $j$.
We denote by $k[\Gamma/N] \rtimes_\Phi k^G$ the resulting Hopf algebra. Moreover the map
\begin{align*}
 q : k[\Gamma] \rtimes  k^H&\longrightarrow k[\Gamma/N] \rtimes_\Phi k^G \\
r\#\delta_h &\longmapsto u(r)\#\Phi(rju(r)^{-1})\delta_{h_{|G}}
\end{align*}
is a surjective Hopf algebra map.
\end{proposition}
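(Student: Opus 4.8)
The plan is to use transport of structure along the explicit map $q$. First observe that all three formulas are written on the linear basis $\{u(\bar\rho)\#\delta_h : \bar\rho\in\Gamma/N,\ h\in G\}$ of $k[\Gamma/N]\otimes k^G$ (note that $u(rs)=u(r)u(s)$ and that $ju(r)ju(s)ju(rs)^{-1}$ and $l.ju(r)\,ju(l.r)^{-1}$ all lie in $N$, so $\Phi$ may be applied), hence they genuinely define linear maps $m,\Delta,S$ on $V:=k[\Gamma/N]\otimes k^G$, together with the unit $u(1)\#1_{k^G}$ and the counit $u(\bar\rho)\#\delta_h\mapsto\delta_{h,1}$. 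The key point is that once I check that $q$ is a surjective linear map intertwining each of these operations with the corresponding operation of $k[\Gamma]\rtimes k^H$, the Hopf algebra axioms for $V$ follow for free: associativity, coassociativity, the bialgebra compatibility, and the counit and antipode axioms are all identities between linear maps built from $m,\Delta,S,\varepsilon,\eta$, and each can be pulled back through the surjections $q$, $q\otimes q$ and $q\otimes q\otimes q$ to the corresponding valid identity in $k[\Gamma]\rtimes k^H$. Thus it suffices to prove that $q$ is surjective and intertwines multiplication, comultiplication, antipode, unit and counit.

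Computing on generators, $q(r\#\delta_h)=\Phi(rju(r)^{-1})(h)\,u(r)\#\delta_h$ when $h\in G$ and $q(r\#\delta_h)=0$ when $h\notin G$; here $\Phi(rju(r)^{-1})(h)\in k^\times$ because $\Phi$ takes invertible values and evaluation at $h$ is an algebra map $k^G\to k$. Since for fixed $\bar\rho$ and $h\in G$ one may choose $r$ with $u(r)=\bar\rho$ and the scalar $\Phi(rju(r)^{-1})(h)$ is nonzero, every basis vector $u(\bar\rho)\#\delta_h$ of $V$ lies in the image; hence $q$ is surjective. That $q$ preserves the unit and counit is then immediate: the normalization $ju(1)=1$ gives $q(1\#1_{k^H})=u(1)\#1_{k^G}$, and since $\Phi(-)(1)=1$ by Lemma \ref{qdeasy}(2) one has $\varepsilon\circ q=\varepsilon$.

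The heart of the matter is the three intertwining identities. For multiplication, since $\delta_h\delta_k=\delta_{h,k}\delta_h$ both $q(r\#\delta_h)\,q(s\#\delta_k)$ (product in $V$) and $q\big((r\#\delta_h)(s\#\delta_k)\big)$ vanish unless $h=k\in G$, and for $h=k\in G$ the identity reduces to the scalar equality
$$\Phi\big(rs\,ju(rs)^{-1}\big)(h)=\Phi\big(rju(r)^{-1}\big)(h)\,\Phi\big(sju(s)^{-1}\big)(h)\,\Phi\big(ju(r)ju(s)ju(rs)^{-1}\big)(h).$$
Because $\Phi(-)(h):N\to k^\times$ is a group morphism into an abelian group, this is a single identity in $N$ that follows after rewriting $ju(r),ju(s)$ by their classes modulo $N$ and repeatedly applying the conjugation invariance $\Phi(r)=\Phi(srs^{-1})$ of the quotient datum (equivalently, Lemma \ref{qdeasy}(1)). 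For comultiplication one expands $\Delta(q(r\#\delta_h))$ using the coproduct of $V$ and compares it termwise, after reindexing the sum over $l\in G$, with $(q\otimes q)\Delta(r\#\delta_h)$ computed from the coproduct of $k[\Gamma]\rtimes k^H$; here the cocycle condition $\Phi(r)(lh)=\Phi(l^{-1}.r)(h)\Phi(r)(l)$ together with the correction factor $\Phi(l.ju(r)\,ju(l.r)^{-1})$ relating the section $j$ to the $G$-action is exactly what forces the two families of scalar coefficients to agree. The antipode intertwining is handled in the same spirit, using Lemma \ref{qdeasy}(2). I expect this comultiplication computation, balancing the section-dependent correction factors against the two defining relations on $\Phi$, to be the main obstacle.

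Finally, for independence of $j$, let $j'$ be a second section. Writing $rju(r)^{-1}=(rj'u(r)^{-1})(j'u(r)\,ju(r)^{-1})$ and applying the morphism $\Phi$ yields
$$\Phi(rju(r)^{-1})(h)=\Phi\big(j'u(r)\,ju(r)^{-1}\big)(h)\,\Phi(rj'u(r)^{-1})(h),$$
where the first factor is a nonzero scalar depending only on $\bar\rho=u(r)$ and $h$. Consequently the maps $q$ and $q'$ attached to $j$ and $j'$ differ, component by component, by nonzero scalars, and therefore have the same kernel. Since $\ker q=\ker q'$, the assignment $q(x)\mapsto q'(x)$ is a well-defined linear bijection of $V$; as both the structure attached to $j$ and that attached to $j'$ are the transported Hopf structure on $(k[\Gamma]\rtimes k^H)/\ker q$, this bijection is a Hopf algebra isomorphism, proving that $k[\Gamma/N]\rtimes_\Phi k^G$ is well defined up to isomorphism.
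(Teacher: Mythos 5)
Your overall strategy coincides with the paper's own: the paper likewise establishes associativity and coassociativity by observing that $q$ is surjective and checking that it intertwines the product and the coproduct, so your transport-of-structure framework, your surjectivity argument, and your reduction of multiplicativity to the scalar identity
$\Phi(rs\,ju(rs)^{-1})=\Phi(rju(r)^{-1})\Phi(sju(s)^{-1})\Phi(ju(r)ju(s)ju(rs)^{-1})$,
proved by the cyclic moves of Lemma \ref{qdeasy}(1), are exactly the paper's computation in disguise. The one place where you genuinely diverge is the independence of the section: the paper writes down the explicit map $f(u(r)\#\delta_h)=u(r)\#\delta_h\Phi(iu(r)^{-1}ju(r))$ between the two structures and verifies by hand that it is a Hopf algebra isomorphism, whereas you note that $q$ and $q'$ have the same kernel (correctly, because the ratio $\Phi(j'u(r)\,ju(r)^{-1})(h)$ is a nonzero scalar depending only on $u(r)$ and $h$) and invoke the canonical identification of the two Hopf quotients of $k[\Gamma]\rtimes k^H$ by this common kernel. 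This is a valid and slicker route --- it is legitimate at that stage since $q$ and $q'$ are already known to be surjective Hopf algebra maps, so their kernels are Hopf ideals --- and it trades the paper's explicitness for a computation-free conclusion.

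The weak point is that you announce rather than execute the comultiplication intertwining, and you explicitly flag it as an unresolved ``main obstacle''; since this is the decisive computation of the whole proposition (and the longest one in the paper's proof), the proposal as written does not yet prove the statement. Rest assured the step succeeds by exactly the mechanism you name: in $(q\otimes q)\Delta(r\#\delta_h)$ one converts the coefficient $\Phi(rju(r)^{-1})(l^{-1})$ into $\Phi(l.(ju(r)r^{-1}))(l)$ via Lemma \ref{qdeasy}(2), then splits $\Phi(l.r\,ju(l.r)^{-1})(lh)$ --- and on the other side $\Phi(l.ju(r)\,ju(l.r)^{-1})(lh)$ --- using the cocycle identity $\Phi(r)(lh)=\Phi(l^{-1}.r)(h)\Phi(r)(l)$, after which the $(h)$- and $(l)$-coefficients of the two expansions match term by term; the antipode intertwining then follows from Lemma \ref{qdeasy}(2) in the same way, as you predicted. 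So: right approach throughout, one genuine (and clean) improvement on the section-independence, but the essential verification is left undone.
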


\begin{proof}
 The first thing to do is to check that the above multiplication, comultiplication and antipode on  $k[\Gamma/N]
\otimes k^G$ are well-defined: this is easily done.
The map $q$ is clearly surjective, so to check that the multiplication and comultiplication just defined on 
  $k[\Gamma/N]
\otimes k^G$ are indeed associative and co-associative, it is enough to check that they are preserved by $q$. For $h,k \in H$ and $r,s \in \Gamma$, we have
\begin{align*}
 q(r\#\delta_h) \cdot q(s\#\delta_k)&= u(r)\#\delta_{h_{|G}}\Phi(rju(r)^{-1}) \cdot u(s)\#\delta_{k_{|G}}\Phi(sju(s)^{-1})\\
&= u(rs)\#(\delta_h\delta_k)_{|G} \Phi(rju(r)^{-1})\Phi(sju(s)^{-1})\Phi(ju(r)ju(s)ju(rs)^{-1})\\
&= u(rs)\#(\delta_h\delta_k)_{|G} \Phi(sju(s)^{-1})\Phi(rju(s)ju(rs)^{-1}) \\
 &= u(rs) \#(\delta_h\delta_k)_{|G} \Phi(sju(s)^{-1})\Phi(ju(s)ju(rs)^{-1}r)\\
&= u(rs)\#(\delta_h\delta_k)_{|G} \Phi(sju(rs)^{-1}r) = u(rs)\#(\delta_h\delta_k)_{|G} \Phi(rsju(rs)^{-1})\\
&=q(r\#\delta_{h} \cdot s\#\delta_{k})
\end{align*}
where we have used Lemma \ref{qdeasy}.
Using  Lemma \ref{qdeasy} again , we  have for $r \in \Gamma$ and $h \in G$
\begin{align*}
 (&q\otimes q )\Delta(r\#\delta_h) = q \otimes q\left(\sum_{l\in H}r\#\delta_{l^{-1}}\otimes l.r\#\delta_{lh}\right) \\
&= \sum_{l\in G} u(r)\#\delta_{l^{-1}}\Phi( r ju(r)^{-1})\otimes u(l.r)\#\delta_{lh}\Phi(l.rju(l.r)^{-1}) \\
&= \sum_{l\in G} \Phi(rju(r)^{-1})(l^{-1})\Phi(l.rju(l.r)^{-1})(lh)u(r)\#
\delta_{l^{-1}}\otimes u(l.r)\#\delta_{lh} \\
&= \sum_{l\in G} \Phi(l.ju(r)l.r^{-1})(l) \Phi(l.rju(l.r)^{-1})(lh)
 u(r)\#\delta_{l^{-1}}\otimes u(l.r)\#\delta_{lh}\\
&= \sum_{l\in G} \Phi(l.ju(r)l.r^{-1})(l)
\Phi(rl^{-1}.ju(l.r)^{-1})(h)\Phi(l.rju(l.r)^{-1})(l)
u(r)\#\delta_{l^{-1}}\otimes u(l.r)\#\delta_{lh} \\
&= \sum_{l\in G} \Phi(l.ju(r)ju(l.r)^{-1})(l)
\Phi(rl^{-1}.ju(l.r)^{-1})(h)
u(r)\#\delta_{l^{-1}}\otimes u(l.r)\#\delta_{lh} \\
\end{align*}
while for $h \not \in G$, we have $(q\otimes q )\Delta(r\#\delta_h)=0$.
On the other hand, denoting again $\Delta$ the new coproduct, we have $\Delta  q(r\#\delta_h)=0$ if $h \not \in G$, and if $h \in G$
\begin{align*}
 &\Delta  q(r\#\delta_h)  = \Delta(u(r)\#\delta_h\Phi(rju(r)^{-1})) = \Phi(rju(r)^{-1})(h)\Delta(u(r)\#\delta_h) \\
&= \Phi(rju(r)^{-1})(h) \sum_{l\in G}u(r)\#\delta_{l^{-1}} \otimes u(l.r) \# \Phi(l.ju(r) ju(l. r)^{-1}) \delta_{lh} \\
& = \sum_{l\in G}\Phi(rju(r)^{-1})(h) \Phi(l. ju(r) ju(l. r)^{-1})(lh)u(r)\#\delta_{l^{-1}}\otimes u(l.r)\#\delta_{lh}\\
& = \sum_{l\in G}\Phi(rju(r)^{-1})(h) \Phi(ju(r) l^{-1}. ju(l. r)^{-1})(h)
\Phi(l. ju(r) ju(l. r)^{-1})(l) u(r)\#\delta_{l^{-1}}\otimes u(l.r)\#\delta_{lh}\\
& =  \sum_{l\in G}\Phi(r l^{-1}. ju(l. r)^{-1})(h)
\Phi(l. ju(r) ju(l. r)^{-1})(l)u(r)\#\delta_{l^{-1}}\otimes u(l.r)\#\delta_{lh}
\end{align*}
Hence $(q\otimes q )\Delta= \Delta q$, and this shows that we indeed get a bialgebra
$k[\Gamma/N] \rtimes_{\Phi, j} k^G$, with the obvious unit and co-unit, and it is easily seen that $S$ defined above is an antipode, so that $k[\Gamma/N] \rtimes_{\Phi, j} k^G$ is a Hopf algebra, and $q$ is a Hopf algebra map. 

Now choose another section $i$ with the same property. It is obvious that the following linear map
\begin{align*}
f:  k[\Gamma/N] \rtimes_{\Phi, j} k^G&\longrightarrow k[\Gamma/N]\rtimes_{\Phi, i} k^G \\ 
u(r)\#\delta_h  & \longmapsto u(r) \#\delta_h\Phi(iu(r)^{-1}ju(r))
\end{align*}
is bijective. Using Lemma \ref{qdeasy}, one checks that
$$u(rs)\#\Phi(ju(r)ju(s)iu(rs)^{-1})\delta_h\delta_k = f\left(u(r)\#\delta_h \cdot u(s)\#\delta_k\right) = f(\delta_h\#u(r))\cdot f(\delta_k\#u(s))$$
\begin{align*}
\Delta f(u(r)\#\delta_h) &= \sum_{l \in G}
\Phi(ju(r)l^{-1}.(iu(l.r)^{-1}))(h)\Phi(l.iu(r)iu(l.r)^{-1})(l) u(r)\#\delta_{l^{-1}} \otimes u(l.r)\delta_{lh} \# u(r) \\
&= (f\otimes f)\Delta(u(r)\#\delta_h)
\end{align*}
Hence $f$ is a Hopf algebra isomorphism.
\end{proof}

We are now going to show that all the quotients of $k[\Gamma] \rtimes k^H$ have the above form. 
Before this, recall that 
a sequence  of Hopf algebra maps
\begin{equation*}k \to B \overset{i}\to A \overset{p}\to L \to
k\end{equation*} is said to be exact \cite{ade} if the following
conditions hold:
\begin{enumerate}\item $i$ is injective, $p$ is surjective and $pi$ = $\varepsilon 1$,
\item $\ker p =Ai(B)^+ =i(B)^+A$, where $i(B)^+=i(B)\cap{\rm Ker}(\varepsilon)$,
\item $i(B) = A^{{\rm co} p} = \{ a \in A:\, (\id \otimes p)\Delta(a) = a \otimes 1
\} = {^{{\rm co} p}A} = \{ a \in A:\, (p \otimes \id)\Delta(a) = 1 \otimes a
\}$. \end{enumerate}

We first need a couple of lemmas.

\begin{lemma}\label{exact}
  Let $H\curvearrowright\Gamma$ as above and let $(G,N,\Phi)\in {\rm QD}(H\curvearrowright\Gamma)$.
Then the Hopf algebra $k[\Gamma/N]\rtimes_\Phi k^G$ fits into an exact sequence of Hopf algebras
$$k \rightarrow k^G \overset{i}\rightarrow  k[\Gamma/N] \rtimes_\Phi k^G \overset{p}\rightarrow k[\Gamma/N] \rightarrow k$$
where $i(f)=1\#f$ and $p = {\rm id} \otimes \varepsilon$. 
\end{lemma}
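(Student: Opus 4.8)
The plan is to verify directly the three defining conditions of an exact sequence for the maps $i$ and $p$, relying on the explicit formulas of Proposition \ref{construction} together with Lemma \ref{qdeasy}. Before anything else I would check that $i$ and $p$ are bialgebra (hence Hopf algebra) maps. For $i(f)=1\#f$ this is immediate from the multiplication and comultiplication formulas once one observes that $ju(1)=1$ and that $\Phi(1)$ is the unit of $(k^G)^\times$ (the constant function $1$), which makes all the $\Phi$-factors trivial; the counit and the standard coproduct $\Delta(\delta_h)=\sum_{l\in G}\delta_{l^{-1}}\otimes\delta_{lh}$ of $k^G$ then match. For $p=\id\otimes\varepsilon$ the key point is that the counit of $k^G$ is evaluation at $1$ and, by Lemma \ref{qdeasy}(2), every factor $\Phi(\cdots)$ occurring in the structure maps satisfies $\Phi(\cdots)(1)=1$, so the cocycle contributions disappear after applying $p$ and one recovers the group algebra structure of $k[\Gamma/N]$. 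Condition (1) is then essentially free: $i$ is the linear inclusion $1\#k^G\hookrightarrow k[\Gamma/N]\otimes k^G$, $p$ is visibly surjective, and $pi(f)=f(1)1=\varepsilon(f)1$.

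For condition (3) I would compute the one-sided coinvariants on basis elements. Applying $\id\otimes p$ to the coproduct formula, only the summand with $lh=1$ survives, giving $(\id\otimes p)\Delta(u(r)\#\delta_h)=(u(r)\#\delta_h)\otimes u(h^{-1}.r)$; symmetrically, isolating the $l=1$ term yields $(p\otimes\id)\Delta(u(r)\#\delta_h)=u(r)\otimes(u(r)\#\delta_h)$. Writing a general element as $a=\sum_{r,h}c_{r,h}\,u(r)\#\delta_h$ and using that the tensors $(u(r)\#\delta_h)\otimes u(s)$ are linearly independent, the coinvariance equation $(\id\otimes p)\Delta(a)=a\otimes 1$ forces $c_{r,h}=0$ unless $u(h^{-1}.r)=u(1)$, i.e. $h^{-1}.r\in N$. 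Here the $G$-stability of $N$ (part (2) of a quotient datum) is precisely what lets me conclude $h^{-1}.r\in N\iff r\in N$, hence $u(r)=1$ and $a\in 1\#k^G=i(k^G)$. The reverse inclusion being clear, this gives $i(k^G)=A^{{\rm co} p}={^{{\rm co} p}A}$.

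Finally, for condition (2) I would identify $\ker p$ as the span of the basis elements $u(r)\#\delta_h$ with $h\neq 1$, and note that $i(B)^+$ is spanned by the $1\#\delta_h$ with $h\neq 1$. Multiplying out with the product formula and again using $ju(1)=1$ and $\Phi(1)=1$ gives $(u(r)\#\delta_k)(1\#\delta_h)=\delta_{k,h}\,u(r)\#\delta_h$ and $(1\#\delta_h)(u(r)\#\delta_k)=\delta_{h,k}\,u(r)\#\delta_h$; letting $r$ and $k$ vary shows that both $A\,i(B)^+$ and $i(B)^+A$ coincide with the span of the $u(r)\#\delta_h$, $h\neq 1$, which is exactly $\ker p$. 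I expect the only genuine bookkeeping obstacle to lie in condition (3): the coinvariance condition must be tested on arbitrary elements rather than on basis vectors, so the argument really does need the linear independence of the relevant tensors combined with the $G$-stability of $N$. Everything else reduces, through Proposition \ref{construction} and Lemma \ref{qdeasy}, to the collapse of the $\Phi$-cocycle upon evaluation at $1$.
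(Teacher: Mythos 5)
Your proof is correct, and it takes the same route as the paper: the paper dismisses this lemma with ``this is a direct easy verification,'' and your write-up simply supplies that verification in full, correctly using Lemma \ref{qdeasy}(2) (the evaluation $\Phi(\cdot)(1)=1$) to collapse the cocycle factors and the $G$-stability of $N$ for the coinvariants computation. No gaps.
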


\begin{proof}
 This is a direct easy verification.
\end{proof}

The next lemma is a generalization of Lemma 4.5 in \cite{bb3}.

\begin{lemma}\label{iso}
 Let $H\curvearrowright\Gamma$ as above and let $(G,N,\Phi)\in {\rm QD}(H\curvearrowright\Gamma)$.
Let $\pi:k[\Gamma/N]\rtimes_\Phi k^G  \rightarrow L$ be a surjective Hopf algebra map, such that $\pi_{|k^G}$ is injective, and such that for $r \in \Gamma$ and $f \in k^G$, we have:
$$\pi(u(r)\#1)=\pi(1 \otimes f) \implies u(r)=1$$ 
where $u : \Gamma \rightarrow \Gamma/N$ is the canonical surjection.
Then $\pi$ is an isomorphism.
\end{lemma}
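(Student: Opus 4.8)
The plan is to realise $\pi$ as the middle vertical map of a morphism between two horizontal exact sequences that is an isomorphism on both ends, and then to invoke the standard isomorphism criterion (five lemma) for exact sequences of Hopf algebras (see \cite{ade}). I would start from the exact sequence of Lemma \ref{exact}, namely $k \to k^G \overset{i}\to A \overset{p}\to k[\Gamma/N] \to k$ with $A:=k[\Gamma/N]\rtimes_\Phi k^G$. Since $\pi$ is surjective and $i(k^G)$ is a normal Hopf subalgebra of $A$, its image $\pi(k^G)$ is a normal Hopf subalgebra of $L$ (apply $\pi$ to $\sum a_{(1)} c\, S(a_{(2)})$ and use surjectivity of $\pi$), and by hypothesis $\pi_{|k^G}$ is an isomorphism onto $\pi(k^G)$. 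Setting $\overline L := L / L\,\pi(k^G)^+$ with quotient map $\mathrm{pr}:L\to\overline L$, one obtains a bottom exact sequence $k\to\pi(k^G)\to L\overset{\mathrm{pr}}\to\overline L\to k$. Because $\pi(i(k^G)^+)\subseteq\pi(k^G)^+\subseteq\ker\mathrm{pr}$ while $\ker p = A\,i(k^G)^+$, the composite $\mathrm{pr}\circ\pi$ factors as $\overline\pi\circ p$ for a surjective Hopf algebra map $\overline\pi : k[\Gamma/N]\to\overline L$; as $k[\Gamma/N]$ is a group algebra, $\overline L\cong k[Q]$ for a quotient group $Q$ of $\Gamma/N$, and $\overline\pi$ is an isomorphism exactly when the only $u(r)$ with $\overline\pi(u(r))=1$ is $u(r)=1$.

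The computational heart is the following identity, valid for all $r\in\Gamma$ and $m\in G$:
\[ (\mathrm{pr}\otimes\id)\,\Delta\big(\pi(u(r)\#\delta_m)\big)=\overline\pi(u(r))\otimes\pi(u(r)\#\delta_m). \]
To establish it I would feed the explicit coproduct of Proposition \ref{construction} into $\mathrm{pr}\otimes\id$. On the left tensor leg one has $\mathrm{pr}\circ\pi=\overline\pi\circ p$ with $p=\id\otimes\varepsilon$, so the counit $\varepsilon(\delta_{l^{-1}})=\delta_{l^{-1},1}$ kills every term with $l\neq 1$; the surviving $l=1$ term carries $\Phi(ju(r)ju(r)^{-1})=\Phi(1)=1_{k^G}$ (as $\Phi$ is a group morphism), which leaves precisely the right-hand side. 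This is a short and direct collapse.

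Injectivity of $\overline\pi$ then follows from the running hypothesis. If $u(r)\in\ker\overline\pi$, i.e. $\overline\pi(u(r))=1$, the displayed identity reads $(\mathrm{pr}\otimes\id)\Delta(\pi(u(r)\#\delta_m))=1\otimes\pi(u(r)\#\delta_m)$ for every $m\in G$, which says that each $\pi(u(r)\#\delta_m)$ lies in ${}^{\mathrm{co}\,\mathrm{pr}}L=\pi(k^G)$ by exactness of the bottom row. Summing over $m$ and using $1_{k^G}=\sum_{m\in G}\delta_m$ gives $\pi(u(r)\#1)\in\pi(k^G)$, that is $\pi(u(r)\#1)=\pi(1\otimes f)$ for some $f\in k^G$; the hypothesis of the lemma then forces $u(r)=1$. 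Hence $\ker\overline\pi$ is trivial and $\overline\pi$ is an isomorphism, so with both outer vertical maps isomorphisms the five lemma yields that $\pi$ is an isomorphism.

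\textbf{Expected main obstacle.} The genuine difficulty is not the collapse above but the surrounding Hopf-algebraic infrastructure: one must justify that the bottom row is really exact, i.e. that $\pi(k^G)$ is normal and that $L$ is faithfully flat over it, so that the crucial identification ${}^{\mathrm{co}\,\mathrm{pr}}L=\pi(k^G)$ holds. This is exactly the point where the finiteness and commutativity of $k^G$ (making $\pi(k^G)$ a finite-dimensional commutative normal Hopf subalgebra) enter, through the standard faithful-flatness and five-lemma results for exact sequences of Hopf algebras; once these are granted, the proof reduces to the computation of the second paragraph together with the hypothesis.
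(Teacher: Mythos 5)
Your proof is correct and takes essentially the same route as the paper's: you form the bottom exact sequence $k\to\pi(k^G)\to L\to\overline L\to k$ by quotienting by the image of $k^G$ (the paper justifies its exactness via the \emph{centrality} of $i(k^G)$ in $k[\Gamma/N]\rtimes_\Phi k^G$, which is immediate from the multiplication formula and is stronger than the normality you invoke, and then cites the same standard references for the faithful-flatness/coinvariants point you rightly flag), identify $\overline L$ as a group algebra, and conclude by the short five lemma. Your displayed identity $(\mathrm{pr}\otimes\id)\Delta\bigl(\pi(u(r)\#\delta_m)\bigr)=\overline\pi(u(r))\otimes\pi(u(r)\#\delta_m)$, summed over $m$, is exactly the paper's observation that $u(r)\#1$ lies in ${_{u(r)}A}$, hence $\pi(u(r)\#1)\in{_{\overline{u(r)}}L}={}^{\mathrm{co}\,q'}L=\pi i(k^G)$ when $\overline{u(r)}=1$, after which the hypothesis forces $u(r)=1$.
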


\begin{proof}
 The proof, that we include for the sake of completeness, is essentially the same as the one of Lemma 4.5 in \cite{bb3}. We start with the previous exact sequence
$$k \rightarrow k^G \overset{i}\rightarrow  k[\Gamma/N] \rtimes_\Phi k^G \overset{p}\rightarrow k[\Gamma/N] \rightarrow k$$
and put $A=  k[\Gamma/N] \rtimes_\Phi k^G$.
Since $\pi i$ is injective and the Hopf subalgebra $\pi i(k^G)$ is central in $L$, we can form the quotient Hopf algebra $\overline{L} = L/ (\pi i(k^H))^+L$, and we get another exact sequence:
$$k\to k^G\xrightarrow{\pi i} L \overset{q}\to \overline{L}  \to k$$ 
This sequence is indeed exact, e.g. by centrality (see \cite{ade, sch}). So we get the following commutative diagram with exact rows, with the Hopf algebra map on the right surjective:
$$\begin{CD}
k @>>>k^G@>{i}>>  A @>{p}>>k[\Gamma/N]@>>>k\\
@.@| @VV{\pi}V @VV{}V\\
k @>>>k^G@>{\pi i}>> L @>{q}>> \overline{L}@>>>k
\end{CD}$$
Since a quotient of a group algebra is still a group algebra, we get a commutative diagram with exact rows as follows:
$$\begin{CD}
k @>>>k^H@>{i}>>  A @>{p}>>k[\Gamma/N]@>>>k\\
@.@|@VV{\pi}V @VV{}V\\
k @>>>k^H@>{\pi i}>> L @>{q'}>>k[\overline{\Gamma/N}]@>>>k
\end{CD}$$

Here the vertical Hopf algebra map on the right is induced by a surjective group morphism $v : \Gamma/N \rightarrow \overline{\Gamma/N}$, $u(r) \mapsto \overline{u(r)}$. By the short five lemma (see e.g. \cite{mas}, or \cite{aga}) we just have to show that $v$ is injective. 

For $r \in \Gamma$, put:
$$_{u(r)}A= \{a \in A \ | \ p(a_{(1)}) \otimes a_{(2)}= u(r) \otimes a\}$$
$$_{\overline{u(r)}}L= \{l \in L \ | \ q'(l_{(1)}) \otimes l_{(2)}= \overline{u(r)} \otimes l\}$$
The commutativity of the right square ensures that $\pi(_{u(r)}A) \subset {_{\overline{u(r)}}L}$.

Now let $r \in \Gamma$ be such that $vu(r)=1$. We  have $q' \pi(u(r) \# 1) = v p(u(r)\# 1)=vu(r)=\overline{u(r)}=1$, hence $\pi(u(r) \otimes 1) \in {_{\overline{1}}L} = \pi i (k^H)$ (exactness of the sequence), so $\pi(u(r) \otimes 1)= \pi(1 \otimes f)$
for some $f \in k^H$. We conclude by our assumption that $u(r)=1$.
\end{proof}

The following result is the last step towards the determination of the quotients of a smash coproduct, and certainly the most useful in concrete situations.

\begin{proposition}\label{quotients}
 Let $\pi :  k [\Gamma]  \rtimes k^H\rightarrow L$ be a surjective Hopf algebra map with $\pi_{|k^H}$ injective. Then 
there exists  $(H,N,\Phi) \in {\rm QD}(H\curvearrowright\Gamma)$ such that $L$ is isomorphic with  
$k[\Gamma/N]\rtimes_\Phi k^H$.
More precisely, the subgroup $N$ is defined by
$$N= \{ r \in \Gamma \ |  \ \exists f \in (k^H)^{\times} \ {\rm with} \ \pi(r \# 1)= \pi(1 \#  f)\}$$
and for $r \in N$, $\Phi(r)$ is the unique $f \in (k^H)^\times$ such that  $\pi(r \# 1)= \pi(1 \#  f)$.
\end{proposition}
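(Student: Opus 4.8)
The plan is to read off $N$ and $\Phi$ from $\pi$ exactly as in the statement, verify that $(H,N,\Phi)$ is a quotient datum, and then realise $L$ as $k[\Gamma/N]\rtimes_\Phi k^H$ by factoring $\pi$ through the map $q$ of Proposition~\ref{construction} and invoking the isomorphism criterion of Lemma~\ref{iso}. Throughout I use that $k[\Gamma]\rtimes k^H$ is a tensor product as an algebra: each $r\#1$ is invertible with inverse $r^{-1}\#1$, each $1\#f$ with $f\in(k^H)^\times$ is invertible and central, and the two subalgebras commute.

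First the elementary structure. Injectivity of $\pi_{|k^H}$ makes the $f$ in the definition of $N$ unique, so $\Phi$ is well defined. Applying $\pi$ to $(r\#1)(s\#1)=rs\#1$ and to $(r\#1)^{-1}=r^{-1}\#1$ shows that $N$ is a subgroup and $\Phi\colon N\to(k^H)^\times$ a group morphism. Since $\pi(1\#\Phi(r))$ is the image of a central element, it is central in $L=\pi(k[\Gamma]\rtimes k^H)$, so for $s\in\Gamma$ we get $\pi(srs^{-1}\#1)=\pi(s\#1)\pi(1\#\Phi(r))\pi(s^{-1}\#1)=\pi(1\#\Phi(r))$; this yields simultaneously $srs^{-1}\in N$ and the relation $\Phi(srs^{-1})=\Phi(r)$, hence normality of $N$.

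The main step is the $H$-stability of $N$ together with the identity $\Phi(r)(lh)=\Phi(l^{-1}.r)(h)\Phi(r)(l)$, and here I exploit the comultiplication. Writing $e_l=\pi(1\#\delta_l)$ for the nonzero central orthogonal idempotents of $L$, the relation $r\#\delta_l=(r\#1)(1\#\delta_l)$ together with $\pi(r\#1)=\pi(1\#\Phi(r))$ gives $\pi(r\#\delta_l)=\Phi(r)(l)\,e_l$. Feeding this into $(\pi\otimes\pi)\Delta(r\#1)=\sum_{l}\pi(r\#\delta_l)\otimes\pi(l^{-1}.r\#1)$ and comparing with $\Delta\pi(1\#\Phi(r))=\sum_{a,b}\Phi(r)(ab)\,e_a\otimes e_b$, then multiplying the first tensor leg by $e_m$ and cancelling it (legitimate since $e_m\neq0$), I obtain $\Phi(r)(m)\,\pi(m^{-1}.r\#1)=\pi\bigl(1\#\sum_b\Phi(r)(mb)\delta_b\bigr)$. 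As $\Phi(r)\in(k^H)^\times$, the function $b\mapsto\Phi(r)(mb)$ is invertible, so $m^{-1}.r\in N$ and the displayed scalar identity is precisely the required relation for $\Phi$. This establishes $(H,N,\Phi)\in{\rm QD}(H\curvearrowright\Gamma)$. I expect this comparison-and-cancellation argument to be the only genuinely delicate point, as it is where the quotient-datum relations actually come from.

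It remains to identify $L$. From $q(r\#\delta_h)=\Phi(rju(r)^{-1})(h)\,u(r)\#\delta_h$, and the computation $\Phi(rn\,ju(r)^{-1})=\Phi(rju(r)^{-1})\Phi(n)$ (using normality, i.e.\ $\Phi(sns^{-1})=\Phi(n)$), one sees that $\ker q$ is spanned by the elements $rn\#\delta_h-\Phi(n)(h)\,r\#\delta_h$ for $r\in\Gamma$, $n\in N$, $h\in H$. A short computation with $\pi(n\#1)=\pi(1\#\Phi(n))$ shows $\pi$ kills these, so $\ker q\subseteq\ker\pi$ and $\pi$ factors as $\pi=\bar\pi\circ q$ for a surjective Hopf algebra map $\bar\pi\colon k[\Gamma/N]\rtimes_\Phi k^H\to L$. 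I then check the hypotheses of Lemma~\ref{iso}: since $q(1\#f)=1\#f$ (because $ju(1)=1$ and $\Phi(1)=1$), $\bar\pi_{|k^H}=\pi_{|k^H}$ is injective; and from $u(r)\#1=q(r\#1)\cdot(1\#\Phi(rju(r)^{-1})^{-1})$ I get $\bar\pi(u(r)\#1)=\pi(r\#1)\pi(1\#g)$ with $g\in(k^H)^\times$, an invertible element of $L$. If this equals $\bar\pi(1\#f)=\pi(1\#f)$, invertibility forces $f$ invertible (else $\pi(1\#f)e_{h_0}=0$ for some nonzero $e_{h_0}$), whence $\pi(r\#1)=\pi(1\#fg^{-1})$ with $fg^{-1}\in(k^H)^\times$, so $r\in N$ and $u(r)=1$. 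Lemma~\ref{iso} then gives that $\bar\pi$ is an isomorphism, so $L\cong k[\Gamma/N]\rtimes_\Phi k^H$.
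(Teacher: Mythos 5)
Your proof is correct and follows essentially the same route as the paper: you read off $N$ and $\Phi$ from $\pi$, obtain $H$-stability together with the identity $\Phi(r)(lh)=\Phi(l^{-1}.r)(h)\Phi(r)(l)$ by comparing $(\pi\otimes\pi)\Delta(r\#1)$ with $\Delta\pi(1\#\Phi(r))$, and conclude by factoring $\pi$ through the map $q$ of Proposition~\ref{construction} and applying Lemma~\ref{iso}, including the same invertibility argument for $f$. The only immaterial variations are that you establish the factorization by computing $\ker q$ explicitly (spanned by the elements $rn\#\delta_h-\Phi(n)(h)\,r\#\delta_h$) instead of directly defining the induced map $\tilde\pi(u(r)\#\delta_h)=\pi(ju(r)\#\delta_h)$ via the section as the paper does, and that you separate tensor legs by multiplying with the idempotents $e_m=\pi(1\#\delta_m)$ rather than invoking the linear independence of the $\pi(1\#\delta_l)$ granted by injectivity of $\pi_{|k^H}$.
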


\begin{proof}
 It is immediate that $N$ defined above is a normal subgroup of $\Gamma$.
The above map $\Phi : N \rightarrow (k^H)^\times$ is defined thanks to the injectivity assumption on $\pi_{|k^H}$,
and it is immediate that $\Phi$ is a group morphism with $\Phi(r)= \Phi(srs^{-1})$ for $r \in N$, $s \in \Gamma$.
Let us check that $N$ is $H$-stable. So let $r \in N$.
We have 
\begin{align*}\Delta \pi(r \#1)& =  \sum_{l\in H}\pi(r\#\delta_l)\otimes \pi(l^{-1}.r\#1) 
= \sum_{l\in H} \pi(1\#\delta_{l})\pi(r\#1) \otimes \pi(l^{-1}. r\#1) \\
&=  \sum_{l\in H} \pi(1\#\delta_{l}\Phi(r)) \otimes \pi(l^{-1}. r\#1)=
\sum_{l\in H} \Phi(r)(l) \pi(1\#\delta_{l}) \otimes \pi(l^{-1}. r\#1)
\end{align*}
On the other hand we have
\begin{align*}
 \Delta \pi(1\#\Phi(r))= \Delta \pi (\sum_{h \in H} \Phi(r)(h) 1\#\delta_h)
= \sum_{h,l \in H}  \Phi(r)(h) \pi(1\#\delta_{l}) \otimes \pi(1\#\delta_{l^{-1}h})
\end{align*}
It then follows from the injectivity of $\pi_{|k^H}$ that for any $l \in H$ we have
\begin{align*} \Phi(r)(l)\pi(l^{-1}. r\#1)&=\sum_{h  \in H}  \Phi(r)(h) \pi(1\#\delta_{l^{-1}h})=
\sum_{h  \in H}  \Phi(r)(lh) \pi(1\#\delta_h) \\
&= \pi\left( 1\#\sum_{h  \in H}  \Phi(r)(lh)\delta_h\right)
\end{align*}
It follows that $l^{-1}. r \in N$ and that $\Phi(r)(lh)=\Phi(l^{-1}\cdot r)(h)\Phi(r)(l)$ for any $h \in H$.
Therefore  $(H,N,\Phi) \in {\rm QD}(H\curvearrowright\Gamma)$.

 Let us choose a a section $j : \Gamma/N \rightarrow \Gamma$ of the canonical projection $u : \Gamma \rightarrow \Gamma/N$ with $ju(1)=1$,
and form the Hopf algebra $k[\Gamma/N]\rtimes_\Phi k^H$ as in Proposition \ref{construction}.
Let $q : k[\Gamma]\rtimes k^H \rightarrow k[\Gamma/N]\rtimes_\Phi k^H$ be as in Proposition \ref{construction}, and let $\tilde{\pi} : k[\Gamma/N] \rtimes_\Phi k^H \rightarrow L$ be defined by
$\tilde{\pi}( u(r)\#\delta_h)=\pi( ju(r)\#\delta_h)$.
We have
\begin{align*} 
\tilde{\pi}q(r\# \delta_h)&= \tilde{\pi}(u(r)\# \delta_h\Phi(rju(r)^{-1}))=\pi(ju(r)\#\delta_h\Phi(rju(r)^{-1}))\\
&= \pi(1\#\Phi(rju(r)^{-1}))\pi(ju(r)\#\delta_h)= \pi(rju(r)^{-1})\#1)\pi(ju(r)\#\delta_h)\\
&= \pi(r\#\delta_h)
\end{align*}
  and hence $\tilde{\pi}q=\pi$.  Since $\pi$ and $q$ are surjective Hopf algebra maps, this proves that
$\tilde{\pi}$ is a surjective Hopf algebra map.   We wish to use the previous lemma.
It is clear that $\tilde{\pi}_{|k^H}$ is injective since
 $\pi_{|k^H}$ is. Let $r \in \Gamma$ be such that
$\tilde{\pi}(u(r)\#1)=\tilde{\pi}(1\#f)$ for $f \in k^H$.
Then we have $\pi(ju(r)\#1)=\pi(1\#f)$, and necessarily $f \in (k^H)^\times$ (otherwise there would exist $f'\not=0$ with $f'f=0$ and then $0=\pi(1\#f'f)=\pi(1\#f')\pi(r\#1)$, which would give $\pi(1\#f')=0$ since $\pi(u(r)\#1)$ is invertible).
Hence we have $ju(r)\in N$ and $1=uju(r)=u(r)$: we conclude by the lemma that $\tilde{\pi}$ is injective.
\end{proof}

We arrive at the general description of Hopf algebra quotients of smash coproduct.

\begin{theorem}\label{theo:quot}
 Let $H\curvearrowright\Gamma$ be a finite group $H$ acting by automorphisms on a discrete group $\Gamma$, and let $L$ be a Hopf algebra quotient of the smash coproduct $k[\Gamma] \rtimes k^H$. Then there exists a quotient datum 
 $(G,N,\Phi) \in {\rm QD}(H\curvearrowright\Gamma)$ such that $L$ is isomorphic to $k[\Gamma/N]\rtimes_\Phi k^G$.
\end{theorem}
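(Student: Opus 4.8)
The plan is to reduce the general statement to Proposition \ref{quotients}, whose only extra hypothesis is that $\pi_{|k^H}$ be injective. So I start with an arbitrary surjective Hopf algebra map $\pi : k[\Gamma]\rtimes k^H \to L$, and the task is to find a subgroup $G\subset H$ through which $\pi$ factors in such a way that the restriction to $k^G$ becomes injective; then the conclusion of Proposition \ref{quotients}, applied with $G$ in the role of $H$, transports back to give the desired datum.

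First I would analyze the restriction $\pi_{|k^H}$, which is a morphism of Hopf algebras because $k^H\cong 1\#k^H$ is a Hopf subalgebra of $k[\Gamma]\rtimes k^H$. Its image is a quotient Hopf algebra of $k^H$, and every such quotient is of the form $k^G$ for a subgroup $G\subset H$, the quotient map being the restriction of functions $f\mapsto f_{|G}$ (dually, the Hopf subalgebras of $k[H]$ are the group algebras of the subgroups of $H$). Concretely, setting $G=\{h\in H \ | \ \pi(1\#\delta_h)\neq 0\}$, the set $G$ is a subgroup of $H$, the elements $\{\pi(1\#\delta_h)\}_{h\in G}$ are linearly independent, and $\pi(1\#\delta_h)=0$ for $h\notin G$. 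Since $k[\Gamma]\rtimes k^H$ is the tensor product algebra $k[\Gamma]\otimes k^H$, we have $r\#\delta_h=(r\#1)(1\#\delta_h)$, whence $\pi(r\#\delta_h)=\pi(r\#1)\pi(1\#\delta_h)=0$ for every $r\in\Gamma$ and every $h\notin G$.

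Next I would factor $\pi$ through the subgroup $G$. The trivial datum $(G,\{1\},1)\in {\rm QD}(H\curvearrowright\Gamma)$ yields, via Proposition \ref{construction}, a surjective Hopf algebra map $q:k[\Gamma]\rtimes k^H \to k[\Gamma]\rtimes k^G$, $r\#\delta_h\mapsto r\#\delta_{h_{|G}}$, whose kernel is spanned by the elements $r\#\delta_h$ with $h\notin G$. By the previous paragraph this kernel is contained in $\Ker(\pi)$, so $\pi$ factors as $\pi=\tilde\pi\circ q$ for a necessarily surjective Hopf algebra map $\tilde\pi:k[\Gamma]\rtimes k^G \to L$. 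Moreover $\tilde\pi_{|k^G}$ is injective, since for $h\in G$ we have $\tilde\pi(1\#\delta_h)=\pi(1\#\delta_h)$ and these are linearly independent.

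Finally, regarding $G$ as acting on $\Gamma$ by restriction of the action, I would apply Proposition \ref{quotients} to $\tilde\pi$ with $G$ in the role of $H$. This produces $(G,N,\Phi)\in {\rm QD}(G\curvearrowright\Gamma)$ with $L\cong k[\Gamma/N]\rtimes_\Phi k^G$; and since the defining conditions of a quotient datum involve only $G$ and its action on $\Gamma$, such a triple is exactly a quotient datum $(G,N,\Phi)\in {\rm QD}(H\curvearrowright\Gamma)$, which is the claim. The one genuinely non-formal point is the identification of the image of $\pi_{|k^H}$ with $k^G$ via restriction of functions; once that structural fact about quotients of $k^H$ is in hand, the factorization through $q$ and the appeal to Proposition \ref{quotients} are routine.
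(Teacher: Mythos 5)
Your proof is correct and takes essentially the same route as the paper: identify the image $\pi(k^H)\simeq k^G$ for a subgroup $G\subset H$, factor $\pi$ through $k[\Gamma]\rtimes k^G$ so that the restriction to $k^G$ becomes injective, and conclude by Proposition \ref{quotients}. You merely make explicit the details the paper leaves implicit (the concrete description of $G$ via the idempotents $\pi(1\#\delta_h)$, the kernel of $q$, and the observation that a quotient datum for $G\curvearrowright\Gamma$ is one for $H\curvearrowright\Gamma$), all of which check out.
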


\begin{proof}
 Let $\pi : k[\Gamma] \rtimes k^H \rightarrow L$ be a surjective Hopf algebra map. Then $\pi(k^H)$ is a Hopf algebra quotient of $k^H$, and hence there exists a subgroup $G \subset H$ such that $\pi$ induces an isomorphism $\pi(k^H) \simeq k^G$. Then there exists a factorization 
$$\xymatrix{ k[\Gamma] \rtimes k^H \ar[rr]^{\pi}\ar[rd]&&L\\&k[\Gamma] \rtimes k^G\ar[ur]_{\pi'}&}$$
where $\pi'_{|k^G}$ is injective, and we conclude by the previous proposition.
\end{proof}

\section{Examples}\label{sec:example}

In order to illustrate the results of the previous section, we now examine a series of examples.

\subsection{First example} We assume in this subsection that ${\rm char}(k)\not=2$.
Let $$\Gamma = D_{\infty}=\mathbb Z_2 * \mathbb Z_2=\langle g_0, g_1 \ | \ g_0^2=1=g_1^2\rangle$$ with the $\mathbb Z_2=\langle h \rangle$-action defined by $h.g_0=g_1$ and $h.g_1=g_0$. 
The Hopf $*$-algebra quotients of $\mathbb C[\mathbb Z_2 * \mathbb Z_2]\rtimes  \mathbb C^{\mathbb Z_2}$ 
have been determined in \cite{bb1}, where this Hopf algebra is denoted $A_h(2)$.  
The methods of the previous paragraph enables us to get without too much effort the description of all the Hopf algebra quotients, over any field of characteristic $\not= 2$.

For $m\geq 1$, let $N_m=\langle (g_0g_1)^m \rangle\simeq \mathbb Z$: this a normal and $H$-stable subgroup
of $\mathbb Z_2 * \mathbb Z_2$.
We get a family of quotients of $k[\mathbb Z_2 * \mathbb Z_2]\rtimes  k^{\mathbb Z_2}$:
$$A(m)=  k[(\mathbb Z_2 * \mathbb Z_2)/N_m]\rtimes  k^{\mathbb Z_2}\simeq k[D_m] \rtimes k^{\mathbb Z_2}$$
of dimension $4m$, with $A(1)\simeq k^{\mathbb Z_2\times \mathbb Z_2}$, $A(2) \simeq k^{D_4}$ and $A(m)$ non-commutative and non-cocommutative if $m \geq 3$.

Now let $\Phi_m : N_m=\langle (g_0g_1)^m \rangle\simeq \mathbb Z \rightarrow \widehat{\mathbb Z_2} =\langle \chi \rangle$ be the unique group morphism with $\Phi_m((g_0g_1)^m)=\chi$.
We have $\Phi_m(h.(g_0g_1)^m)=\Phi_m((g_0g_1)^{-m})=\chi^{-1}=\chi$, so $(\mathbb Z_2, N_m, \Phi_m) \in {\rm QD}(\mathbb Z_2 \curvearrowright \mathbb Z_2 * \mathbb Z_2)$. 
We get a family of quotients of $k[\mathbb Z_2 * \mathbb Z_2]\rtimes  k^{\mathbb Z_2}$:
$$B(m)= k[(\mathbb Z_2 * \mathbb Z_2)/N_m]\rtimes_{\Phi_m}  k^{\mathbb Z_2}$$
of dimension $4m$, with $B(1)\simeq k^{\mathbb Z_4}$, and $B(m)$ non-commutative and non-cocommutative if $m \geq 2$.
The Hopf algebras $A(m)$ and $B(m)$ were studied by Masuoka in \cite{mas00}, Nikshych \cite{ni}, Suzuki \cite{su}, Vainerman \cite{va}, and probably others.

\begin{proposition}
 The non trivial Hopf algebra quotients of  $k[\mathbb Z_2 * \mathbb Z_2]\rtimes  k^{\mathbb Z_2}$ are:
\begin{enumerate}
 \item $k[D_m]$, $m \geq 1$, $k[D_\infty]$,
\item $A(m) =  k[(\mathbb Z_2 * \mathbb Z_2)/N_m]\rtimes  k^{\mathbb Z_2} \simeq k[D_m] \rtimes  k^{\mathbb Z_2}$, $m \geq 1$,
\item $B(m)=  k[(\mathbb Z_2 * \mathbb Z_2)/N_m]\rtimes_{\Phi_m}  k^{\mathbb Z_2}$, $m \geq 1$.
\end{enumerate}
\end{proposition}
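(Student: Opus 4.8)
The plan is to invoke Theorem~\ref{theo:quot}: every Hopf algebra quotient of $A=k[\mathbb Z_2*\mathbb Z_2]\rtimes k^{\mathbb Z_2}$ is isomorphic to some $k[\Gamma/N]\rtimes_\Phi k^G$ with $(G,N,\Phi)\in\mathrm{QD}(\mathbb Z_2\curvearrowright\mathbb Z_2*\mathbb Z_2)$, where $\Gamma=D_\infty$. Thus it suffices to enumerate all quotient data and read off the associated Hopf algebras. Since $H=\mathbb Z_2$ has only the two subgroups $\{1\}$ and $\mathbb Z_2$, I would organize the whole argument around the choice of $G$.

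The first genuine step is group-theoretic: determine the normal subgroups of $\Gamma=D_\infty$ and decide which are $H$-stable. Setting $t=g_0g_1$, so that $\langle t\rangle\cong\mathbb Z$ has index two and $g_0tg_0=t^{-1}$, I would argue that a normal subgroup either lies in $\langle t\rangle$, giving $\{1\}$ or $N_m=\langle t^m\rangle$, or else contains a reflection $t^kg_0$; in the latter case conjugation by $t$ sends $t^kg_0$ to $t^{k+2}g_0$, forcing $t^{-2}\in N$, and a short case analysis pins $N$ down to $\Gamma$ or to one of the two index-two subgroups $\langle t^2,g_0\rangle$, $\langle t^2,g_1\rangle$. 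As the generator $h$ of $H$ swaps $g_0\leftrightarrow g_1$ and inverts $t$, it fixes each $N_m$ but interchanges the last two subgroups, so the $H$-stable normal subgroups are exactly $\{1\}$, the $N_m$ $(m\ge1)$, and $\Gamma$.

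Next I would pin down the admissible $\Phi$. Lemma~\ref{qdeasy}(2) gives $\Phi(r)(1)=1$, which already forces $\Phi$ trivial when $G=\{1\}$; hence for $G=\{1\}$ the quotients are the group algebras $k[\Gamma/N]$, running through $k[D_\infty]$ $(N=\{1\})$, $k[\Gamma/N_m]\cong k[D_m]$, and $k[\mathbb Z_2]\cong k[D_1]$ for the two index-two subgroups $\langle t^2,g_i\rangle$ (which are permitted here, $G$-stability being automatic), together with the excluded trivial quotient $k$ $(N=\Gamma)$ --- so item~(1) and nothing more. For $G=\mathbb Z_2$ only the $H$-stable $N$ survive. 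When $N=N_m$, I would use that $t^m$ is conjugate to $t^{-m}$ via $g_0$ together with the relation $\Phi(r)=\Phi(srs^{-1})$ to deduce $\Phi(t^m)=\Phi(t^m)^{-1}$; combined with $\Phi(t^m)(1)=1$ and $\mathrm{char}(k)\neq2$ this leaves exactly $\Phi(t^m)(h)\in\{1,-1\}$, i.e. the trivial datum (yielding $A(m)$) and $\Phi_m$ (yielding $B(m)$), after checking as in Example~\ref{ex:char} that both genuinely satisfy all conditions of a quotient datum. The two remaining $H$-stable choices $N=\{1\}$ and $N=\Gamma$ return $A$ itself and $k^{\mathbb Z_2}\cong k[\mathbb Z_2]=k[D_1]$ (self-duality of $\mathbb Z_2$, valid since $\mathrm{char}(k)\neq2$); the former is the whole algebra and the latter already appears in item~(1), so neither adds a new entry.

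Assembling the cases yields precisely the list, each entry being a genuine quotient by Proposition~\ref{construction}. I expect the main obstacle to be the combination of completeness in the two computational steps: verifying that the normal-subgroup list is exhaustive, and, above all, that the two equivariance conditions in the definition of a quotient datum really collapse the a priori larger family of group morphisms $N_m\to(k^{\mathbb Z_2})^\times$ down to just the trivial one and $\Phi_m$. Once these two finiteness facts are secured, matching the surviving data to the Hopf algebras $A(m)$ and $B(m)$ is routine.
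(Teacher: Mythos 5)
Your proposal is correct and follows essentially the same route as the paper: a dichotomy on the subgroup $G\subset\mathbb Z_2$ (equivalently, on injectivity of $\pi_{|k^{\mathbb Z_2}}$, handled via Proposition~\ref{quotients}/Theorem~\ref{theo:quot}), reduction of the $\mathbb Z_2$-stable normal subgroups to the $N_m=\langle (g_0g_1)^m\rangle$, and the computation $\Phi((g_0g_1)^m)=\Phi((g_0g_1)^{-m})=\Phi((g_0g_1)^m)^{-1}$ forcing $\lambda=\pm1$ in characteristic $\neq 2$, hence $\Phi$ trivial or $\Phi=\Phi_m$. The only difference is that you spell out details the paper leaves to the reader (the full classification of normal subgroups of $D_\infty$, including the two index-two reflection subgroups swapped by $h$, and the degenerate data $N=\{1\}$, $N=\Gamma$), which is a welcome but not structurally different elaboration.
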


\begin{proof}
 Let $\pi : k[\mathbb Z_2 * \mathbb Z_2]\rtimes  k^{\mathbb Z_2} \rightarrow L$ be a surjective Hopf algebra map with $\dim(L)>1$. If $\pi_{| k^{\mathbb Z_2}}$ is not injective, then it is trivial, and  $L$ is quotient of $k[\mathbb Z_2*\mathbb Z_2]$, and hence is isomorphic to $k[D_m]$ for some $m \geq 1$ or $m=\infty$.
Now assume that $\pi_{| k^{\mathbb Z_2}}$ is injective.
It is not difficult to check that the non-trivial $\mathbb Z_2$-stable normal subgroups of  $\mathbb Z_2 * \mathbb Z_2$
are precisely the $N_m=\langle (g_0g_1)^m \rangle$, $m \geq 1$. Let $\Phi : N_m \rightarrow (k^{\mathbb Z_2)})^\times$
be a group morphism such that $(\mathbb Z_2, N_m,\Phi) \in  {\rm QD}(\mathbb Z_2\curvearrowright \mathbb Z_2 * \mathbb Z_2)$. Let $\lambda \in k^*$ be such that $\Phi((g_0g_1)^m)=\delta_1+\lambda \delta_h$. We have
$$\Phi((g_0g_1)^m)=\Phi(g_0(g_0g_1)^mg_0)=\Phi((g_1g_0)^{m})=\Phi((g_0g_1)^{-m})=\Phi((g_0g_1)^m)^{-1}$$
Hence $\lambda=\lambda^{- 1}$, and either $\Phi$ si trivial or $\Phi= \Phi_m$ as above. We conclude by Proposition \ref{quotients}.
\end{proof}

A rough version of the previous result is as follows.

\begin{corollary}\label{quotinftoy}
 The only  non-trivial infinite-dimensional quotient of $k[\mathbb Z_2 * \mathbb Z_2]\rtimes  k^{\mathbb Z_2}$
is $k[\mathbb Z_2 * \mathbb Z_2]$.
\end{corollary}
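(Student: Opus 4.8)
The plan is to read the answer straight off the classification established in the preceding Proposition and then sort the listed quotients by dimension. The non-trivial quotients fall into three families: the group-algebra quotients $k[D_m]$ with $m \geq 1$ together with $k[D_\infty]$; the smash coproducts $A(m) \simeq k[D_m] \rtimes k^{\mathbb Z_2}$; and the twisted smash coproducts $B(m)$. So the first step is simply to record the dimensions of each family. The dihedral group $D_m$ has order $2m$, hence $k[D_m]$ has dimension $2m$ and is finite-dimensional for every $m \geq 1$; the only infinite-dimensional member among the group-algebra quotients is the limiting case $k[D_\infty]$, where $D_\infty$ is the infinite dihedral group $\mathbb Z_2 * \mathbb Z_2$ itself. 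The Proposition already notes that both $A(m)$ and $B(m)$ have dimension $4m$, so these are finite-dimensional for all $m \geq 1$.

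Comparing these dimension counts, I would conclude that the unique infinite-dimensional entry in the whole list is $k[D_\infty] = k[\mathbb Z_2 * \mathbb Z_2]$, which is exactly the asserted statement. The only point meriting a word of care is the identification $D_\infty = \mathbb Z_2 * \mathbb Z_2$, together with the observation that this infinite-dimensional quotient arises precisely in the case where $\pi_{|k^{\mathbb Z_2}}$ fails to be injective (hence is trivial) and $L$ is a quotient of $k[\mathbb Z_2 * \mathbb Z_2]$; among such group-algebra quotients only the full group algebra, corresponding to $m = \infty$, is infinite-dimensional. Since all of this bookkeeping is already carried out inside the proof of the Proposition, there is no genuine obstacle here: the full classification does all the work, and the corollary is just the extraction of the infinite-dimensional case.
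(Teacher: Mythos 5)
Your proposal is correct and matches the paper exactly: the corollary is stated there as a ``rough version'' of the preceding proposition, i.e.\ it is read off the classification by noting that $A(m)$ and $B(m)$ have dimension $4m$, $k[D_m]$ has dimension $2m$, and only $k[D_\infty]=k[\mathbb Z_2 * \mathbb Z_2]$ (an identification the paper itself makes at the start of the subsection) is infinite-dimensional. Nothing further is needed.
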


\subsection{} A first generalization of the previous example is given by
 $$\Gamma = \mathbb Z_2^{*n}=\langle g_0, g_1, \ldots , g_{n-1} \ | \ g_0^2=1=g_1^2= \cdots g_{n-1}^2\rangle$$
with the $S_n$-action given by permutation of the generators. The Hopf algebra 
$k[\mathbb Z_2^{*n}] \rtimes k^{S_n}$ is considered in \cite{rw}, where the ``easy'' quotients are described. Using Theorem \ref{theo:quot}, we get that a Hopf algebra quotient of $k[\mathbb Z_2^{*n}] \rtimes k^{S_n}$ is isomorphic to $k[\mathbb Z_2^{*n}/N] \rtimes_{\rm \Phi} k^{G}$ where $(G, N, \Phi)\in {\rm QD}(S_n \curvearrowright \mathbb Z_2^{*n})$. As pointed out in \cite{rw}, there are many normal $S_n$-stable subgroups $N \subset \mathbb Z_2^{*n}$.

\subsection{The main example} \label{subsec:main} We now come to the examples that motivated this study. %In this subsection we assume that $k$ has characteristic zero. 
Let $M, N \geq 2$ and consider the group
$$\Gamma_{M,N}=<g_0, \ldots,g_{M-1}\ | \ g_0^N=\ldots=g_{M-1}^N=1,[g_{i_1}\cdots g_{i_N},g_{j_1}\cdots g_{j_N}]=1>$$
endowed with the cyclic action of $\mathbb Z_M=\langle h \rangle$ on the generators. 
If $M=N=2$,  we are in the situation of the first example.

The Hopf algebra $k[\Gamma_{M,N}]\rtimes k^{\mathbb Z_M}$ arose in \cite{bb3} from certain representations of Wang's quantum permutation algebra. The following description of $\Gamma_{M,N}$ is given in \cite{bb3}. 

\begin{lemma}\label{lem:gammaMN}
 We have a group isomorphism 
$$ \Gamma_{M,N} \simeq \mathbb Z^{(M-1)(N-1)}\rtimes \mathbb Z_N$$
More precisely, for  $0 \leq i \leq M-1$, $0 \leq c \leq N-1$, put $a_{ic} = g_0^{c-1} g_i g_0^{-c}$, and let $T$ be the subgroup of $\Gamma_{M,N}$ generated by the elements $a_{ic}$. Then $T$ is a free abelian group of rank $(M-1)(N-1)$, with basis $\{a_{ic}, \ 1 \leq i \leq M-1, \ 1 \leq c \leq N-1 \}$, and 
there is a split exact sequence 
$$1 \rightarrow T \rightarrow \Gamma_{M,N} \rightarrow \mathbb Z_N \rightarrow 1$$
where the group morphism on the right $\Gamma_{M,N} \rightarrow \mathbb Z_N=\langle t\rangle$ is defined by $g_i \mapsto t$. The $\mathbb Z_N=\langle t \rangle$-action on $T$ is given by $t\cdot a_{ic}= g_0a_{ic}g_0^{-1} = a_{i,c+1}$, while the $\mathbb Z_M=\langle h \rangle$-action on $\Gamma_{M,N}$ is given by $h\cdot a_{ic}=a_{i+1,c}a_{1,c}^{-1}$, $h\cdot g_0=g_0a_{10}$.
\end{lemma}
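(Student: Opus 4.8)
The plan is to establish the isomorphism directly, by writing down mutually inverse group homomorphisms between $\Gamma_{M,N}$ and an explicitly presented semidirect product, and then to read off every other assertion (the structure of $T$, the split exact sequence, and the two group actions) from these maps. Concretely, I would let $\tilde T$ be the abelian group generated by symbols $b_{ic}$ with $1\leq i\leq M-1$ and $c\in\mathbb Z/N\mathbb Z$, subject only to $\sum_{c\in\mathbb Z/N\mathbb Z}b_{ic}=0$ for each $i$; then $\tilde T$ is free abelian of rank $(M-1)(N-1)$ with basis $\{b_{ic}:1\leq i\leq M-1,\ 1\leq c\leq N-1\}$, and $t\cdot b_{ic}=b_{i,c+1}$ defines a $\mathbb Z/N\mathbb Z=\langle t\rangle$-action preserving the defining relations. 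I set $K=\tilde T\rtimes\mathbb Z/N\mathbb Z$ and aim to prove $\Gamma_{M,N}\cong K$.

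The one genuine observation is that, since $g_0^{-c}=g_0^{N-c}$, each element $a_{ic}=g_0^{c-1}g_ig_0^{-c}=g_0^{c-1}g_ig_0^{N-c}$ is literally a product of exactly $N$ generators of $\Gamma_{M,N}$, i.e.\ a length-$N$ word. By the defining commutation relations the $a_{ic}$ therefore pairwise commute, so they generate an abelian subgroup $T$. A short telescoping computation gives $a_{i1}a_{i2}\cdots a_{iN}=g_i^Ng_0^{-N}=1$ for each $i$, and $g_0a_{ic}g_0^{-1}=g_0^cg_ig_0^{-(c+1)}=a_{i,c+1}$ by direct calculation. These three facts say exactly that $b_{ic}\mapsto a_{ic}$, $t\mapsto g_0$ respects the presentation of $K$, hence define a homomorphism $\rho:K\to\Gamma_{M,N}$. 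In the reverse direction I define $\psi:\Gamma_{M,N}\to K$ by $g_0\mapsto t$ and $g_i\mapsto b_{i1}\,t$ for $i\geq1$; here the checks are that $\psi(g_i)^N=\big(\sum_{c\in\mathbb Z/N\mathbb Z}b_{ic}\big)\,t^N=1$, where the relation $\sum_cb_{ic}=0$ is precisely what is needed, and that every length-$N$ word lands in the abelian part $\tilde T$, so the commutator relations are respected. Evaluating $\rho\psi$ and $\psi\rho$ on generators (using $a_{i1}=g_ig_0^{-1}$ and $t^{c-1}\cdot b_{i1}=b_{ic}$) shows $\rho$ and $\psi$ are mutually inverse, proving $\Gamma_{M,N}\cong K$.

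From the isomorphism the remaining claims follow formally. The subgroup $T=\rho(\tilde T)$ is free abelian of rank $(M-1)(N-1)$ with basis the $a_{ic}$, $1\leq c\leq N-1$; the composite $\Gamma_{M,N}\cong K\to\mathbb Z/N\mathbb Z$ is the morphism $g_i\mapsto t$ with kernel $T$, split by $t\mapsto g_0$, giving the split exact sequence; and the induced $\langle t\rangle$-action on $T$ is $a_{ic}\mapsto a_{i,c+1}$, as recorded. For the $\mathbb Z_M=\langle h\rangle$-action it then remains to check that $g_i\mapsto g_{i+1}$ (indices mod $M$) preserves the defining relations --- which is clear, since it permutes the relations $g_j^N=1$ and permutes length-$N$ words among themselves --- so that it is an automorphism, and to compute its values: $h\cdot g_0=g_1=g_0(g_0^{-1}g_1)=g_0a_{10}$, while $h\cdot a_{ic}=g_1^{c-1}g_{i+1}g_1^{-c}$, which one rewrites as $a_{i+1,c}a_{1,c}^{-1}$.

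I expect the only real content to be the length-$N$ observation together with the well-definedness of $\psi$: once one knows the $a_{ic}$ commute and that $\sum_cb_{ic}=0$ forces $\psi(g_i)^N=1$, the two homomorphisms pin down the group and automatically guarantee that $T$ carries no relations beyond those in $\tilde T$ (any extra relation among the $a_{ic}$ would push forward under $\psi$ to an impossible relation among the free generators $b_{ic}$). The most calculation-heavy point is the last one, verifying $g_1^{c-1}g_{i+1}g_1^{-c}=a_{i+1,c}a_{1,c}^{-1}$ for the $h$-action; since $\psi$ is injective this can be checked after applying $\psi$, where both sides become explicit elements of $K$, reducing it to bookkeeping with the $\langle t\rangle$-action on the $b_{ic}$.
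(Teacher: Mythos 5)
Your proposal is correct, but it takes a genuinely different route from the paper's proof. The paper defines $T$ intrinsically as the kernel of the morphism $\Gamma_{M,N}\to\mathbb Z_N$, $g_i\mapsto t$: since every element of $\Gamma_{M,N}$ is a positive word (as $g_i^{-1}=g_i^{N-1}$), the kernel is generated by words of length exactly $N$ and is therefore abelian; it then shows that the subgroup $T_0$ generated by the $a_{ic}$ is normal (via $g_i a_{jc} g_i^{-1}=a_{j,c+1}$) and contains the elements $a_{i0}=g_0^{-1}g_i$, so has index at most $N$ in $\Gamma_{M,N}$, whence $T_0=T$; generation by the subfamily $\{a_{ic}:\ 1\le i\le M-1,\ 1\le c\le N-1\}$ follows from the same identities ($a_{0c}=1$ and $\prod_{c=0}^{N-1}a_{ic}=1$) that you use. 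Crucially, the paper does \emph{not} prove freeness of $T$ within the lemma: it defers this to an explicit representation of $\Gamma_{M,N}$ (the representation $\rho_Q$ of Section 5 with root-independent parameters, for which $E_Q=(0)$, cf. \cite{bb3}), whose restriction to $T$ has free abelian image of the correct rank. Your Tietze-style argument --- presenting $K=\tilde T\rtimes\mathbb Z_N$ explicitly and exhibiting mutually inverse homomorphisms $\rho$ and $\psi$, with well-definedness of $\psi$ resting exactly on the relation $\sum_c b_{ic}=0$ and on the fact that length-$N$ words land in the abelian part $\tilde T$ --- establishes the whole statement, freeness included, purely from the presentation; and your device of verifying $h\cdot a_{ic}=a_{i+1,c}a_{1,c}^{-1}$ after applying the injective $\psi$ is sound (both sides map to $b_{i+1,c}-b_{1,c}$ in $\tilde T$, including the boundary case $i=M-1$, where $b_{0c}=0$). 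What each approach buys: yours is self-contained and pins down the presentation of $\Gamma_{M,N}$ completely, with no appeal to representations; the paper's kernel-plus-index argument is shorter for the generation statement, identifies $T$ intrinsically as the kernel of the degree map, and obtains freeness essentially for free from machinery ($\rho_Q$, $E_Q$) that it must develop later anyway for the Hopf image computations.
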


\begin{proof}
 Let $T$ be the kernel of the above group morphism $\Gamma_{M,N} \rightarrow \mathbb Z_N=\langle t\rangle$. It is clear that $T$ is generated by the elements of type $g_{i_1} \cdots g_{i_N}$, and hence is abelian. The elements $a_{ic}$ belong to $T$, and let $T_0$ be the subgroup generated by these elements. Using the relations
$$g_i a_{jc} g_i^{-1} = a_{j,c+1}, \ g_i^{-1} a_{jc} g_i = a_{j,c-1}$$
we see that $T_0$ is normal in $\Gamma_{M,N}$. The elements $a_{i0} = g_0^{-1}g_i$ belong to $T_0$, and hence we have $[\Gamma_{N,M} : T_0] \leq N$. But then
$N = [\Gamma_{N,M} : T] \leq [\Gamma_{N,M} : T_0] \leq N$, and
thus $T_0=T$. That $T$ is generated by $\{a_{ic}, \ 1 \leq i \leq M-1, \ 1 \leq c \leq M-1 \}$ follows from the identities $$a_{0c}=1, \ {\rm for} \ {\rm any} \ c, \ {\rm and} \ \prod_{c=0}^{N-1} a_{ic}=1 \ {\rm for} \ {\rm any} \ i$$ and to prove that $T$ is indeed free one considers a certain representation of $\Gamma_{M,N}$, see \cite{bb3}, or the examples in the last section. The last assertion about the actions is immediate.
\end{proof}

%\begin{lemma}
%Consider the free abelian group $\mathbb Z^{(M-1)(N-1)}$, presented as the multiplicative abelian group generated by elements $a_{ic}$, $0 \leq i \leq M-1$, $0 \leq c \leq N-1$ submitted to the relations 
%$$\prod_{c=0}^{N-1} a_{ic}=1, \ \forall i=0, \ldots, M-1, \quad a_{0c}=1, \ \forall c=0, \ldots , N-1$$
%Consider the action of $\mathbb Z_N=\langle t \rangle$ on $\mathbb Z^{(M-1)(N-1)}$ given by $t(a_{ic})=a_{i,c+1}$. We have a group isomorphism 
%\begin{align*} \Gamma_{M,N} & \longrightarrow \mathbb Z^{(M-1)(N-1)}\rtimes \mathbb Z_N \\
% g_0 & \longmapsto (1,t) \\
%i\geq 1, \ g_i &\longmapsto (a_{i0},t)
%\end{align*}
%Moreover, the $\mathbb Z_M$-action corresponds, via this isomorphism,  to $s(a_{ic})=a_{i+1,c}a_{1,c}^{-1}$, $s(t)=a_{10}t$.
%\end{lemma}

%\begin{proof}
% It is a direct verification to check the existence of the morphism in the statement, together with it $\mathbb Z_M$-equivariance. The surjectivity is easily checked, while the injectivity is checked using an appropriate unitary representation $\Gamma_{M,N} \rightarrow U(N)$, see \cite{bb3}, Lemma 4.4.
%\end{proof}

Our main result on the Hopf algebra quotients of $k[\Gamma_{M,N}]\rtimes k^{\mathbb Z_M}$ is the following generalization of Corollary \ref{quotinftoy}.

\begin{theorem}\label{infiquot}
 Let $f : k[\Gamma_{M,N}]\rtimes k^{\mathbb Z_M} \rightarrow A $ be surjective Hopf algebra map with $A$ infinite-dimensional and non-cocommutative. Assume that one of the following conditions holds.
\begin{enumerate}
 \item $N=2$ and $M$ is prime.
\item $M=2$ and $N$ is prime.
\end{enumerate}
Then $f$ is an isomorphism.
\end{theorem}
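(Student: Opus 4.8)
The plan is to feed the hypotheses into the structure theorem (Theorem \ref{theo:quot}) and then reduce everything to a purely group-theoretic statement about $\Gamma_{M,N}$, which I would attack through the module picture coming from Lemma \ref{lem:gammaMN}. Throughout I keep the notation of that lemma, writing $T\cong\mathbb Z^{(M-1)(N-1)}$ for the free abelian normal subgroup with $\Gamma_{M,N}/T\cong\mathbb Z_N$ generated by the image of $g_0$; and, to avoid a clash with the integer $N$, I denote by $K$ the normal subgroup occurring in a quotient datum. By Theorem \ref{theo:quot} I may write $A\cong k[\Gamma_{M,N}/K]\rtimes_\Phi k^G$ for some $(G,K,\Phi)\in\mathrm{QD}(\mathbb Z_M\curvearrowright\Gamma_{M,N})$. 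As an algebra $A=k[\Gamma_{M,N}/K]\otimes k^G$, so $\dim A=[\Gamma_{M,N}:K]\cdot|G|$; the assumption that $A$ is infinite-dimensional thus forces $[\Gamma_{M,N}:K]=\infty$. If $G=\{1\}$ then $A=k[\Gamma_{M,N}/K]$ would be cocommutative, against hypothesis, so $G\neq\{1\}$; since in both cases $\mathbb Z_M$ has prime order ($M$ prime in (1), $M=2$ in (2)) it has no proper nontrivial subgroup, whence $G=\mathbb Z_M$.

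Everything therefore reduces to proving that the only normal, $\mathbb Z_M$-stable subgroup $K\triangleleft\Gamma_{M,N}$ of infinite index is $K=\{1\}$. Granting this, $\Phi\colon K\to(k^{\mathbb Z_M})^\times$ is forced to be trivial, so $A\cong k[\Gamma_{M,N}]\rtimes k^{\mathbb Z_M}$; and since $f_{|k^{\mathbb Z_M}}$ is injective (as $G=\mathbb Z_M$) one is exactly in the situation of Proposition \ref{quotients}, where for $K=\{1\}$ and trivial $\Phi$ the map $q$ is the identity, so that $f$ coincides with the isomorphism $\tilde\pi$ and is itself an isomorphism. I would establish $K=\{1\}$ in two steps: first $K\cap T=\{1\}$, then $K\subseteq T$.

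For the first step I use that, under the respective primality hypothesis, $T$ becomes the ring of integers of a cyclotomic field as a module over the relevant cyclic group ring. In case (1) the $\mathbb Z_M=\langle h\rangle$-action, together with the defining relations, presents $T$ as the cyclic module $\mathbb Z[t]/(1+t+\cdots+t^{M-1})\cong\mathbb Z[\zeta_M]$, the quotient being a domain precisely because $M$ is prime (so $1+t+\cdots+t^{M-1}$, the $M$-th cyclotomic polynomial, is irreducible). Since $K$ is $\mathbb Z_M$-stable, $K\cap T$ is then an ideal of $\mathbb Z[\zeta_M]$; as $[\Gamma_{M,N}:K]=\infty$ forces $[T:K\cap T]=\infty$ and every nonzero ideal of $\mathbb Z[\zeta_M]$ has finite index, we conclude $K\cap T=\{1\}$. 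In case (2) the identical argument applies with the $\mathbb Z_N=\langle g_0\rangle$-action in place of the $\mathbb Z_M$-action (so only normality of $K$ is invoked), $N$ prime giving $T\cong\mathbb Z[\zeta_N]$ and again $K\cap T=\{1\}$.

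For the second step I invoke normality once more: for $n\in K$ and $x\in T$ the commutator $[n,x]$ lies in $K\cap T=\{1\}$, so $K$ centralizes $T$ and $K\subseteq C_{\Gamma_{M,N}}(T)$. Because the $\mathbb Z_N$-action defining the semidirect product $\Gamma_{M,N}=T\rtimes\mathbb Z_N$ is faithful---multiplication by the order-$N$ element $\zeta_N$ in case (2), and negation when $N=2$ in case (1)---a direct computation gives $C_{\Gamma_{M,N}}(T)=T$, so $K\subseteq T$ and hence $K=K\cap T=\{1\}$, as required. The real obstacle, and the reason for the primality assumptions, is concentrated in the first step: one needs $T$ to be a module over a Dedekind domain (here $\mathbb Z[\zeta_p]$) in which every nonzero ideal has finite index, and this breaks down as soon as $1+t+\cdots+t^{p-1}$ becomes reducible, i.e. as soon as the relevant parameter is composite.
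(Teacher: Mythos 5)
Your proof is correct, and while it follows the paper's overall skeleton (Theorem \ref{theo:quot} to reduce to a quotient datum, primality of $M$ plus non-cocommutativity to force $G=\mathbb Z_M$, then a purely group-theoretic statement that the only normal $\mathbb Z_M$-stable subgroup of infinite index in $\Gamma_{M,N}$ is trivial), it genuinely diverges from the paper in how it handles subgroups not contained in $T$. The paper first proves containment in $T$ via Lemma \ref{lem:simple}: if $N$ is prime and a normal subgroup $V$ satisfies $V\not\subseteq T$, then the quotient is finite abelian, shown by reducing a generator $at^k$ to one with $k=1$ — a step that uses primality of $N$. You instead prove $K\cap T=\{1\}$ \emph{first}, and then deduce $K\subseteq T$ by the commutator argument ($[K,T]\subseteq K\cap T$ since $K$ is normal and $\Gamma_{M,N}/T\cong\mathbb Z_N$ is abelian) together with $C_{\Gamma_{M,N}}(T)=T$, which only needs faithfulness of the $\mathbb Z_N$-action on $T$ (multiplication by $\zeta_N$, resp.\ negation) — no primality at all. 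Note this forces your order of steps, opposite to the paper's, and it isolates the primality hypothesis entirely in the cyclotomic step. That step, in turn, is the paper's Lemma \ref{linearalg} in ring-theoretic dress: your claim that $T\cong\mathbb Z[t]/(1+t+\cdots+t^{p-1})\cong\mathbb Z[\zeta_p]$ as a module is true (and worth a line of verification — cyclicity is easily checked from the matrices in Lemma \ref{linearalg}, or one can bypass it, since only ``torsion-free of rank one over a domain whose nonzero ideals have finite index'' is used), and ``every nonzero ideal of $\mathbb Z[\zeta_p]$ has finite index'' is exactly the paper's statement that $u,f(u),\ldots,f^{p-2}(u)$ are linearly independent for $u\neq 0$, i.e.\ that a nonzero stable subgroup has full rank $p-1=(M-1)(N-1)$. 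What each approach buys: yours gives a cleaner conceptual dividing line and handles the ``not inside $T$'' case uniformly for all $N$; the paper's Lemma \ref{lem:simple} yields the extra information that quotients by normal subgroups not inside $T$ are finite \emph{and abelian}. Finally, your closing step — that $K=\{1\}$ and $G=\mathbb Z_M$ make the map $q$ of Proposition \ref{quotients} the identity, so $f$ coincides with the isomorphism $\tilde\pi$ — correctly upgrades the abstract isomorphism of Theorem \ref{theo:quot} to the assertion that $f$ itself is an isomorphism, a point the paper passes over with ``Hence $V=0$ and we are done.''
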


In other words, the only non-trivial infinite-dimensional quotients of $k[\Gamma_{M,N}]\rtimes k^{\mathbb Z_M}$
are group algebras.

To prove Theorem \ref{infiquot}, we will need a couple of lemmas. 

\begin{lemma}\label{lem:simple}
 Assume that $N$ is a prime number and that $V \subset \mathbb Z^{(M-1)(N-1)} \rtimes \mathbb Z_N$ is a normal  subgroup. If $V \not \subset  \mathbb Z^{(M-1)(N-1)}$, then the quotient group 
 $(\mathbb Z^{(M-1)(N-1)} \rtimes \mathbb Z_N) /V$ is finite and abelian.
\end{lemma}

\begin{proof}
 First note that it is clear from the definition of $\Gamma_{M,N}$ that an abelian quotient is finite, hence we just have to show that $(\mathbb Z^{(M-1)(N-1)} \rtimes \mathbb Z_N) /V$ is abelian. There exists, by the assumption, $a \in \mathbb Z^{(M-1)(N-1)}$ and $1 \leq k \leq N-1$ such that $at^k \in V$. Working in the quotient group, the assumption that $N$ is prime enables us to assume that $k=1$, and hence $at \in V$. Hence the quotient group  $(\mathbb Z^{(M-1)(N-1)} \rtimes \mathbb Z_N)/V$ is generated by the image of the abelian group $\mathbb Z^{(M-1)(N-1)}$, and is abelian.
\end{proof}

\begin{lemma}\label{linearalg}
Let $p$ be a prime number and let $f: \mathbb Q^{p-1} \rightarrow \mathbb Q^{p-1}$ be a $\mathbb Q$-linear map whose matrix in the canonical basis is 
$$ \begin{pmatrix} 0&0&\cdots&0&0&-1\\1 &0 &\cdots &0&0&-1\\ 0&1& \cdots &0& 0 &-1\\ \vdots & \vdots & \vdots & \vdots & \vdots & \vdots\\
0&0&\cdots &1 &0&-1\\0&0&\cdots &0 &1&-1\end{pmatrix} \quad {\rm or} \quad 
\begin{pmatrix} -1&-1&\cdots&-1&-1& -1\\1 &0 &\cdots &0&0&0\\ 0&1& \cdots & 0& 0 &0 \\ \vdots & \vdots & \vdots & \vdots & \vdots &\vdots \\
0&0&\cdots & 1  &0&0\\0 & 0&0&\cdots&1&0\end{pmatrix}$$
Then for any non-zero $u \in \mathbb Q^{p-1}$, the elements $u, f(u), \ldots , f^{p-2}(u) \in \mathbb Q^{p-1}$ are $\mathbb Q$-linearly independent. 
\end{lemma}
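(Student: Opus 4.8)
The plan is to recognize that in both displayed cases $f$ is (up to the choice of convention) a companion matrix of the polynomial
$\Phi(x) = 1 + x + x^2 + \cdots + x^{p-1}$, which for $p$ prime is the $p$-th cyclotomic polynomial and is irreducible over $\mathbb{Q}$. Once this is in place, the linear independence statement is a formal consequence of irreducibility, and the primality hypothesis enters exactly through it.

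First I would compute the characteristic polynomials. The first matrix is the companion matrix in the convention where the coefficients occupy the last column, and the second is the companion matrix in the convention where they occupy the top row; expanding $\det(xI - f)$ along the appropriate row or column (or quoting the standard companion-matrix formula) gives in both cases $\det(xI-f) = x^{p-1} + x^{p-2} + \cdots + x + 1 = \Phi(x)$. By the Cayley--Hamilton theorem this yields $\Phi(f) = 0$. I would then invoke the irreducibility of $\Phi$ over $\mathbb{Q}$: since $p$ is prime, $\Phi(x) = (x^p-1)/(x-1)$ is irreducible, for instance by Eisenstein's criterion applied to $\Phi(x+1)$, whose nonleading coefficients are the binomial coefficients $\binom{p}{k}$ with $1 \le k \le p-1$ (all divisible by $p$) and whose constant term $p$ is not divisible by $p^2$.

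Finally, suppose for contradiction that $u, f(u), \ldots, f^{p-2}(u)$ are $\mathbb{Q}$-linearly dependent for some nonzero $u$. Then $g(f)(u) = 0$ for some nonzero $g \in \mathbb{Q}[x]$ with $\deg g \le p-2 < p-1 = \deg \Phi$. As $\Phi$ is irreducible and does not divide $g$, we have $\gcd(g,\Phi) = 1$, so B\'ezout provides $a, b \in \mathbb{Q}[x]$ with $ag + b\Phi = 1$; evaluating at $f$ and applying to $u$, and using $g(f)(u) = 0$ and $\Phi(f) = 0$, gives $u = a(f)g(f)(u) + b(f)\Phi(f)(u) = 0$, a contradiction. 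Hence the $p-1$ vectors are independent. The only genuine computation here is the characteristic-polynomial identity for the two matrices, which is the step most likely to require care (particularly for the second, less standard, convention); the conceptual content is entirely carried by the irreducibility of $\Phi$, which forces every nonzero vector to be cyclic for $f$. Equivalently, one could phrase the conclusion by noting that $\Phi(f)=0$ with $\Phi$ irreducible makes $\mathbb{Q}^{p-1}$ a one-dimensional vector space over the field $K = \mathbb{Q}[x]/(\Phi) \cong \mathbb{Q}(\zeta_p)$, so any nonzero $u$ is a $K$-basis and $\{u, f(u), \ldots, f^{p-2}(u)\}$, being the image of the $\mathbb{Q}$-basis $\{1, x, \ldots, x^{p-2}\}$ of $K$, is a $\mathbb{Q}$-basis of $\mathbb{Q}^{p-1}$.
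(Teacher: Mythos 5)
Your proof is correct, and while it turns on the same pivot as the paper's --- the irreducibility of $P(X)=1+X+\cdots+X^{p-1}$ for $p$ prime forces every nonzero vector to be cyclic for $f$ --- the mechanics differ in two genuine ways. First, where the paper invokes the structure theory of modules over a principal ideal domain ($P$ is the unique invariant factor, hence $\mathbb{Q}^{p-1}\simeq \mathbb{Q}[X]/(P)$ is a simple $\mathbb{Q}[X]$-module, so any nonzero $u$ generates), you get by with only Cayley--Hamilton and a B\'ezout identity: if some nonzero $g$ of degree at most $p-2$ killed $u$, then $ag+bP=1$ evaluated at $f$ and applied to $u$ would force $u=0$. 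This is more elementary, and your closing reformulation ($\mathbb{Q}^{p-1}$ is a one-dimensional vector space over $\mathbb{Q}[x]/(P)\cong\mathbb{Q}(\zeta_p)$) recovers the paper's simple-module picture without quoting the invariant-factor theorem. Second, your treatment of the second matrix diverges from the paper's and is arguably cleaner: you recognize it outright as the top-row-convention companion matrix of $P$, so its characteristic polynomial is $P$ by the same direct computation as for the first matrix, whereas the paper determines its minimal polynomial indirectly, from $f^p=1$ together with the observation that $1$ is not an eigenvalue, so that the minimal polynomial divides the irreducible $P$. Both routes deliver $P(f)=0$ with $P$ irreducible, after which the conclusions coincide; yours buys self-containedness (Eisenstein applied to $P(x+1)$, Cayley--Hamilton, B\'ezout), while the paper's buys brevity by citing the structure theorem.
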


\begin{proof}
As usual we view $\mathbb Q^{p-1}$ as a $\mathbb Q[X]$-module by letting $X.v=f(v)$, for any $v \in \mathbb Q^{p-1}$
 The first matrix is the companion matrix of the cyclotomic polynomial $$P(X)=1+ X + \cdots +X^{p-2} + X^{p-1}\in \mathbb Q[X]$$ and hence $P(X)$ is the characteristic polynomial of $f$, as well as its minimal polynomial since $P$ is irreducible in $\mathbb Q[X]$. Then since $P$ is irreducible, it is the only invariant factor of $f$ and the structure theory of modules of a principal ideal domain then gives that, as a $\mathbb Q[X]$-module, one has $\mathbb Q^{p-1} \simeq \mathbb Q[X]/(P)$ and $\mathbb Q^{p-1}$ is a simple $\mathbb Q[X]$-module. 
In particular any non zero  $u \in \mathbb Q^{p-1}$ generates $\mathbb Q^{p-1}$ as a $\mathbb Q[X]$-module. Hence since the $\mathbb Q$-subspace generated by  $u, f(u), \ldots , f^{p-2}(u)$ is also a $\mathbb Q[X]$-submodule, we have that these elements generate $\mathbb Q^{p-1}$ and hence also are linearly independent.
The proof for the second matrix is the same as soon as we know that the minimal polynomial of $f$ is $P$, which is easily seen, using that $f^p=1$ and that $1$ is not an eigenvalue of $f$, so that the minimal polynomial of $f$ divides the irreducible polynomial $P$.
\end{proof}

\begin{proof}[Proof of Theorem \ref{infiquot}]
 Let $\pi : k[\Gamma_{M,N}]\rtimes k^{\mathbb Z_M} \rightarrow A $ be surjective Hopf algebra map, with $A$ infinite-dimensional. Then, by Theorem \ref{theo:quot}, $\pi$ induces an isomorphism 
$$k[\Gamma_{M,N}/V] \rtimes_\Phi k^{G}\simeq A$$ 
for $(G, V,\Phi) \in {\rm QD}(\mathbb Z_M \curvearrowright\Gamma_{M,N})$. Since $M$ is prime, either $G$ is trivial or $G=\mathbb Z_M$, and hence $G=\mathbb Z_M$ since $A$ is assumed to be non-cocommutative. We get 
$$k[\Gamma_{M,N}/V] \rtimes_\Phi k^{\mathbb Z_M}\simeq A$$ 
 Then
  Lemma \ref{lem:simple} gives $V \subset \mathbb Z^{(M-1)(N-1)}$, since $N$ is prime  and $A$ is infinite-dimensional. Moreover $V$ is $\mathbb Z_N$-stable (since normal) and $\mathbb Z_M$-stable. The $\mathbb Z_N$ and $\mathbb Z_M$ actions are, in additive notation, implemented by the matrices of Lemma \ref{linearalg}, and hence it follows that if $V \not=0$, then $V$ contains a free abelian subgroup of rank $N-1$ and a free abelian subgroup of rank $M-1$. The quotient of finite rank free abelian group by a subgroup of the same rank is finite, hence if $M=2$ or $N=2$, we have that if $V \not =0$, then $A$ is finite-dimensional, a contradiction. Hence $V=0$ and we are done. 
\end{proof}

\subsection{A quotient datum that is not of the type of Example \ref{ex:char}} We assume that ${\rm char}(k)\not=3$, we put $M=3=N$ and consider the crossed coproduct of the previous subsection
$$k[\Gamma_{3,3}] \rtimes k^{\mathbb Z_3} \simeq k[\mathbb Z^4 \rtimes \mathbb Z_3] \rtimes k^{\mathbb Z_3}$$
We retain the previous notation (see Lemma \ref{lem:gammaMN}):
\begin{itemize}
 \item $\mathbb Z^4$ is seen as the free multiplicative abelian group on $4$ variables $a_{11}$, $a_{12}$, $a_{21}$, $a_{22}$.
\item The first $\mathbb Z_3= \langle t \rangle$-action is given by
$$ t. a_{11}=a_{12}, \ t.a_{12}=a_{11}^{-1}a_{12}^{-1},  \ t.a_{21}=a_{22}, \ t.a_{22}=a_{21}^{-1}a_{22}^{-1}$$
\item  The second $\mathbb Z_3= \langle h \rangle$-action is given by
$$ h\cdot a_{11}=a_{11}^{-1}a_{21}, \ h \cdot a_{12}=a_{12}^{-1}a_{22},  \ h \cdot a_{21}=a_{11}^{-1}, \ h \cdot a_{22}=a_{12}^{-1}, \ h\cdot t= t a_{11}^{-1}a_{12}^{-1}$$
\end{itemize}
For $m \geq 2$, let $N_m= \langle a_{11}^m, a_{12}^m, a_{21}^m, a_{22}^m\rangle \subset \mathbb Z^4$. The group $N_m$ is free abelian of rank $4$, hence for $\alpha, \beta\in k^\times$, there exists a unique group morphism 
\begin{align*}
 \Phi : N_m & \longrightarrow (k^{\mathbb Z_3})^{\times} \\
a_{11}^m, a_{12}^m & \longmapsto \delta_1 + \alpha\delta_{h} + \alpha\beta \delta_{h^2} \\
a_{21}^m, a_{22}^m & \longmapsto \delta_1 + \beta^{-1}\delta_{h} + \alpha \delta_{h^2}
\end{align*}
It is a tedious but straightforward verification to check that for $\alpha^3=1 =\beta^3$, then $(\mathbb Z_3, N_m,\Phi) \in {\rm QD}(\mathbb Z_3 \curvearrowright \mathbb Z^4 \rtimes \mathbb Z_3)$ (in fact any $\Phi$ such that  $(\mathbb Z_3, N_m,\Phi) \in {\rm QD}(\mathbb Z_3 \curvearrowright \mathbb Z^4 \rtimes \mathbb Z_3)$ has the above form). However $\Phi$ has values into $\widehat{\mathbb Z_3}$ only when $\alpha=\beta$. This therefore furnishes the announced example.

\section{Hopf image of a smash coproduct}\label{sec:hopfim}

In this section we show how to describe the Hopf image of a representation of a smash coproduct as above.

\subsection{Hopf images} We begin by recalling the basic facts on Hopf images \cite{bb1}.

Let $A$ be Hopf algebra, let $R$ be an algebra and let $\rho : A \rightarrow R$ be an algebra map.

A factorization of $\rho$
is a triple $(L, q, \varphi)$  where $L$ is a Hopf algebra,
$q : A \rightarrow L$ is a surjective Hopf algebra map
and $\varphi : L \rightarrow A$ is an algebra map, with 
the decomposition $\rho = \varphi  q$.
The category of factorizations
of $\rho$ is defined in the obvious manner and the Hopf image of $\rho$ is defined to be the
final object in this category (hence we can also say that this is a minimal factorization), which is easily shown to exist (see \cite{bb1}).
 
In other words, the Hopf image of $\rho$ is a factorization $(A_\rho , p , \tilde{\rho})$ having the following property:
if $(L,q, \varphi)$ is another 
factorization of $\rho$, there exists a unique
Hopf algebra map $f : L \longrightarrow A_\rho$ such that
$f q = p$ and $\tilde{\rho} f = \varphi$.  
$$\xymatrix{
A \ar[rr]^{\rho} \ar[dr]_{p} \ar@/_/[ddr]_q & & R  \\
& A_\rho \ar[ur]_{\tilde{\rho}} & \\
& L \ar@{-->}[u]_f \ar@/_/[uur]_{\varphi}& 
}$$
The algebra map $\rho : A \rightarrow R$ is said to be inner faithful if $(A, {\rm id}_A, \rho)$ is the Hopf image of $\rho$: this is equivalent to saying that ${\rm Ker}(\rho)$ does not contain any non-zero Hopf ideal, see \cite{bb1}.

Computing a Hopf image is in general a difficult problem. The following cases are well understood, at least from the theoretical viewpoint.

\begin{enumerate}
 \item If $A =k[\Gamma]$ is a group algebra, then the Hopf image of $\rho$ is $k[\Gamma/N]$ where $N={\rm Ker}(\rho_{|\Gamma})$, and the representation is inner faithful if and only if $N=\{1\}$.
\item If $A=k^H$, with $H$ a finite group, $R=k^n$ and the algebra map $\rho$ is given by
 \begin{align*}
  k^H &\longrightarrow k^n \\
  f &\longmapsto (f(h_1), \ldots , f(h_n))
 \end{align*}
for $h_1, \ldots ,h_n \in H$, then the Hopf image of $\rho$
is  $k^{\langle h_1, \ldots ,h_n\rangle}$ and $\rho$ is inner faithful if and only if $H = \langle h_1, \ldots ,h_n\rangle$, see \cite{bb1}. Note that by the semisimplicity and commutativity of $k^H$, this example enables one to describe the Hopf image for any representation $k^H \rightarrow M_n(k)$.
\end{enumerate}

\subsection{Hopf images and smash coproducts} As before, let $H\curvearrowright\Gamma$ be a finite group $H$ acting by automorphisms on a discrete group $\Gamma$, and let $R$ be an algebra. Our aim is to describe the Hopf image of an algebra map $\rho : k[\Gamma]\rtimes k^H \rightarrow R$, therefore unifying the descriptions given at the end of the previous subsection. In fact, to simplify the set-up, we will always assume that $\rho_{|k^H}$ is inner faithful (otherwise, we can factorize $\rho$ by an algebra map $\rho':k[\Gamma] \rtimes k^{H'} \rightarrow R$ with $H'$ a subgroup of $H$ and $\rho'_{|k^{H'}}$ inner faithful, thanks to the last item in the previous subsection). If 
$(H,N,\Phi) \in {\rm QD}(H\curvearrowright\Gamma)$, then we simply denote $(N,\Phi)$ the corresponding element of ${\rm QD}(H\curvearrowright\Gamma)$.

\begin{proposition}\label{hopfim}
  Let $H\curvearrowright\Gamma$ as above and let $\rho :   k[\Gamma] \rtimes k^H\rightarrow R$
be an algebra map such that $\rho_{|k^H}$ is inner faithful. Let 
$$\mathcal E(\rho)=\{(H, N,\Phi)=(N,\Phi) \in {\rm QD}(H\curvearrowright\Gamma) \ | \ 
\forall r \in N, \ \rho( r\#1) = \rho(1\#\Phi(r))\}$$
For any $(N,\Phi) \in \mathcal E(\rho)$, there exists a factorization
$$\xymatrix{k[\Gamma]\rtimes k^H\ar[rr]^{\rho}\ar[rd]_q&& R\\&  k[\Gamma/N] \rtimes_\Phi k^H\ar[ur]_{\tilde{\rho}}&}$$
where if $j : \Gamma/N \rightarrow \Gamma$ is a section of the canonical projection $u: \Gamma \rightarrow \Gamma/N$ with $ju(1)=1$, $q(r\#\delta_h)= u(r)\#\delta_h\Phi(rju(r)^{-1})$ and $\tilde{\rho}(u(r)\#\delta_h)=\rho(ju(r)\#\delta_h)$.

Endow $\mathcal E(\rho)$ with the partial order defined by $(N,\Phi)\leq (M,\Psi) \iff N\subset M$ and $\Psi_{|N}=\Phi$. Then $\mathcal E(\rho)$  admits a maximal element. For any maximal element $(N,\Phi)\in \mathcal E(\rho)$, 
the above factorization is universal and $k[\Gamma/N] \rtimes_\Phi k^H$ is isomorphic to the Hopf image of $ \rho$.
\end{proposition}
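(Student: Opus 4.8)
The plan is to handle the two assertions separately. For the factorization attached to a datum $(N,\Phi)\in\mathcal E(\rho)$, the map $q$ is already the surjective Hopf algebra map of Proposition \ref{construction} (taken with $G=H$), so only $\tilde\rho$ needs attention. Since $ju(r)$ depends only on $u(r)$, the rule $\tilde\rho(u(r)\#\delta_h)=\rho(ju(r)\#\delta_h)$ defines a linear map, and I would check multiplicativity by comparing $\tilde\rho$ applied to the product $(u(r)\#\delta_h)(u(s)\#\delta_k)=u(rs)\#\delta_h\delta_k\,\Phi(ju(r)ju(s)ju(rs)^{-1})$ with $\tilde\rho(u(r)\#\delta_h)\,\tilde\rho(u(s)\#\delta_k)=\rho(ju(r)ju(s)\#\delta_h\delta_k)$. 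Writing $x=ju(r)ju(s)ju(rs)^{-1}$ one has $u(x)=1$, hence $x\in N$, and the defining relation $\rho(x\#1)=\rho(1\#\Phi(x))$ of $\mathcal E(\rho)$, together with $ju(r)ju(s)=x\cdot ju(rs)$ and the commutativity of $k^H$, turns one side into the other. The identity $\tilde\rho q=\rho$ is obtained the same way, using $rju(r)^{-1}\in N$ and $\rho(rju(r)^{-1}\#1)=\rho(1\#\Phi(rju(r)^{-1}))$.

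For the existence of a maximal element I would use Zorn's lemma. The poset is non-empty, containing the trivial datum $(\{1\},\Phi_0)$ with $\Phi_0(1)=1_{k^H}$. Given a chain $\{(N_i,\Phi_i)\}$, put $N=\bigcup_i N_i$ and let $\Phi$ be the common extension of the $\Phi_i$; then $N$ is a normal, $H$-stable subgroup (a nested union of such), $\Phi$ is a well-defined group morphism, and all the defining conditions of a quotient datum, being pointwise identities in the relevant group elements, as well as the condition $\rho(r\#1)=\rho(1\#\Phi(r))$, pass to $N$ because every $r\in N$ already lies in some $N_i$. Thus $(N,\Phi)\in\mathcal E(\rho)$ is an upper bound, and a maximal element exists.

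Now fix a maximal $(N,\Phi)$, write $B=k[\Gamma/N]\rtimes_\Phi k^H$, and let $(A_\rho,p,\mu)$ be the Hopf image of $\rho$, with $\mu\colon A_\rho\to R$. The universal property applied to the factorization $(B,q,\tilde\rho)$ gives a unique Hopf algebra map $f\colon B\to A_\rho$ with $fq=p$ and $\mu f=\tilde\rho$; since $p=fq$ is surjective, so is $f$. The crucial preliminary fact is that $p_{|k^H}$ is injective: its kernel is a Hopf ideal of $k^H$ contained in $\ker(\rho_{|k^H})=\ker(\mu\,p_{|k^H})$, which is zero by inner faithfulness of $\rho_{|k^H}$. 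Hence Proposition \ref{quotients} applies to $p$ and produces a quotient datum $(N',\Phi')$, where $N'=\{r\in\Gamma\mid\exists\,g\in(k^H)^\times,\ p(r\#1)=p(1\#g)\}$ and $A_\rho\cong k[\Gamma/N']\rtimes_{\Phi'}k^H$; applying $\mu$ to $p(r\#1)=p(1\#\Phi'(r))$ shows $(N',\Phi')\in\mathcal E(\rho)$.

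It remains to compare the two data and conclude with Lemma \ref{iso}. For $r\in N$ one computes $q(r\#1)=1\#\Phi(r)$, so $p(r\#1)=f(1\#\Phi(r))=p(1\#\Phi(r))$ with $\Phi(r)\in(k^H)^\times$; therefore $r\in N'$ and, by injectivity of $p_{|k^H}$, $\Phi'(r)=\Phi(r)$. Thus $(N,\Phi)\leq(N',\Phi')$, and maximality forces $(N,\Phi)=(N',\Phi')$, in particular $N'=N$. Finally I would invoke Lemma \ref{iso} for $f$: it is surjective, $f_{|k^H}=p_{|k^H}$ is injective, and if $f(u(r)\#1)=f(1\#g)$ then $p(ju(r)\#1)=p(1\#g)$, where $p(ju(r)\#1)$ is invertible (with inverse $p(ju(r)^{-1}\#1)$), which forces $g\in(k^H)^\times$ exactly as in Proposition \ref{quotients}; hence $ju(r)\in N'=N$ and $u(r)=1$. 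Lemma \ref{iso} then shows $f$ is an isomorphism, so $B=k[\Gamma/N]\rtimes_\Phi k^H$ is the Hopf image of $\rho$. The main subtlety I anticipate lies in this last part: deducing injectivity of $p_{|k^H}$ from inner faithfulness and, above all, orienting the comparison so that $(N,\Phi)\leq(N',\Phi')$, since maximality then supplies precisely the equality $N'=N$ that the hypothesis of Lemma \ref{iso} demands.
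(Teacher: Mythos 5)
Your proposal is correct and follows essentially the same route as the paper: the factorization via the map $q$ of Proposition \ref{construction}, Zorn's lemma on $\mathcal E(\rho)$ with unions along chains, the universal map $f$ to the Hopf image, Proposition \ref{quotients} applied to $p$ (with $p_{|k^H}$ injective by inner faithfulness) to produce a datum dominating $(N,\Phi)$, maximality to force equality, and Lemma \ref{iso} to conclude. The only cosmetic differences are that you verify multiplicativity of $\tilde\rho$ directly where the paper deduces it from $\tilde\rho q=\rho$ and surjectivity of $q$, and you bypass the paper's intermediate comparison of the two subgroups $M$ (defined via $\pi$) and $M'$ (defined via $p$) by translating everything through $p$ at once.
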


\begin{proof}
Let $(N,\Phi) \in \mathcal E(\rho)$.
 The Hopf algebra map $q$ is defined in Proposition \ref{construction}. We have
\begin{align*}\tilde{\rho}q (r\#\delta_h)&=\rho(ju(r)\#\delta_h\Phi(rju(r)^{-1}))=
\rho(1\#\Phi(rju(r)^{-1})) \rho(ju(r)\#\delta_h) \\
&=\rho(rju(r)^{-1}\#1) \rho(ju(r)\#\delta_h) = \rho(r\#\delta_h)
\end{align*}
Hence $\tilde{\rho}q=\rho$ and $\tilde{\rho}$ is an algebra map, and we have our factorization. It immediate that
$\mathcal E(\rho)$ is non empty, that $\leq$ defined above is indeed a partial order on $\mathcal E(\rho)$, and it is an easy verification to check that $ \mathcal E(\rho)$, endowed with this partial order, is inductively ordered. By Zorn's Lemma we can pick a  maximal element $(N,\Phi)$  in 
 $\mathcal E(\rho)$. Let us show that the previous factorization realizes the Hopf image of $\rho$.
So let $(L,p,\overline{\rho})$ be the universal factorization of $\rho$: the universal property of the Hopf image yields a Hopf algebra map $\pi : k[\Gamma/N] \rtimes_\Phi k^H\rightarrow L$ such that the following diagram and all its subdiagrams commute.
$$\xymatrix{
  k[\Gamma] \rtimes k^H \ar[rr]^{\rho} \ar[dr]_{p} \ar@/_/[ddr]_q & &R)  \\
& L \ar[ur]_{\overline{\rho}} & \\
&  k[\Gamma/N] \rtimes_\Phi k^H \ar@{-->}[u]_\pi \ar@/_/[uur]_{\tilde{\rho}}& 
}$$
By construction $\pi$ is surjective, and $\pi_{|k^H}$ is injective since $p$ is (by the inner faithfulness of  $\rho_{|k^H}$). Let 
$$M= \{r \in \Gamma \ | \ \exists f \in k^H \ {\rm with} \ \pi(u(r)\#1) = \pi(1\#f)\}$$ and
$$M'= \{r \in \Gamma \ | \ \exists f \in k^H \ {\rm with} \ p(r\#1) = p(1\#f)\}$$
For $r \in M$, we have 
$$p(r\#1)= \pi q(r\#1)=\pi(u(r)\#\Phi(rju(r)^{-1}))=\pi(1\#\Phi(rju(r)^{-1})f)=p(1\#\Phi(1\#rju(r)^{-1})f)$$ for some $f \in k^H$, hence $r\in M'$. For $r\in M'$, we have
$$\pi(u(r)\#1)=\pi q(r\#\Phi(ju(r)r^{-1}))=p(1\#\Phi(ju(r)r^{-1})f)= \pi(1\#\Phi(ju(r)r^{-1})f)$$
for some $f \in k^H$, and $r \in M$. Hence $M=M'$. We know, by Proposition \ref{quotients}, that $M$ is an $H$-stable normal subgroup of $\Gamma$ and that there exists $\Psi : M \rightarrow C(H)^\times$ such that 
$(M,\Psi) \in  {\rm QD}(H\curvearrowright\Gamma)$ and $p(r\#1)=p(1\#\Psi(r))$ for $r \in M$. For $r \in M$, we have
$$\rho(r\#1)=\overline{\rho}p(r\#1)=\overline{\rho}p(1\#\Psi(r))=\rho(1\#\Psi(r))$$
and hence $(M,\Psi)\in \mathcal E(\rho)$. It is clear from the first description of $M$ that $N \subset M$.
For $r \in N$, we have 
$$p(r\#1) = \pi q(r\#1)=p(1\#\Phi(r))=p(1\#\Psi(r))$$
hence $(N,\Phi)\leq (M,\Psi)$, and we have $N=M$ by maximality of $(N,\Phi)$. It then follows from Lemma \ref{iso}
that $\pi$ is injective, and hence is an isomorphism.
\end{proof}

\begin{remark}{\rm 
 It is in fact possible to avoid the use of Zorn's Lemma in the previous proof, using the existence of the Hopf image. We found the use of Zorn's Lemma more convenient to formulate the proof. A drawback is that the description is not very explicit (but this would not be more explicit without Zorn's Lemma).  } 
\end{remark}

 We now present two situations where the Hopf image has a more explicit description. 

\begin{corollary}\label{corohopfim}
 Let $H\curvearrowright\Gamma$ as above and let $\rho :  k[\Gamma] \rtimes k^H  \rightarrow R$
be a representation such that $\rho_{|k^H}$ is inner faithful. Consider the $H$-stable normal subgroup of $\Gamma$
$$N= \{ r \in \Gamma \ | \ \forall h \in H, \ \exists f \in (k^H)^\times \ {\rm with} \ \rho(h\cdot r\#1)=\rho(1\#f)\}$$
 and assume that there exists $\Phi : N \rightarrow (k^H)^\times$ such that $(N,\Phi) \in \mathcal E(\rho)$. Then the Hopf image of $\rho$ is isomorphic with $k[\Gamma/N] \rtimes_\Phi k^H$.
\end{corollary}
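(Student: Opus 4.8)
The goal is to prove Corollary \ref{corohopfim}, which identifies the Hopf image of $\rho$ with $k[\Gamma/N]\rtimes_\Phi k^H$ under the hypothesis that the specific subgroup $N$, defined via the $H$-stability condition, admits a $\Phi$ making $(N,\Phi)\in\mathcal E(\rho)$. The plan is to deduce this from Proposition \ref{hopfim} by showing that the given pair $(N,\Phi)$ is in fact a \emph{maximal} element of the poset $(\mathcal E(\rho),\leq)$. Once maximality is established, the universal factorization statement of Proposition \ref{hopfim} immediately gives the desired isomorphism of the Hopf image with $k[\Gamma/N]\rtimes_\Phi k^H$.

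First I would verify that the $N$ displayed in the corollary really is an $H$-stable normal subgroup of $\Gamma$, so that the statement is well-posed; this is essentially by construction, since membership is tested against all $H$-translates $h\cdot r$, which forces $H$-stability, and the group-morphism properties of $\Phi$ give that $N$ is a subgroup (normality follows as in Proposition \ref{quotients}). The substantive work is the maximality claim. So suppose $(M,\Psi)\in\mathcal E(\rho)$ is any element with $(N,\Phi)\leq(M,\Psi)$, i.e. $N\subseteq M$ and $\Psi_{|N}=\Phi$; I must show $M\subseteq N$. Take $r\in M$. Since $(M,\Psi)\in\mathcal E(\rho)$ and $M$ is $H$-stable (being part of a quotient datum for $H\curvearrowright\Gamma$), for every $h\in H$ we have $h\cdot r\in M$ and hence $\rho((h\cdot r)\#1)=\rho(1\#\Psi(h\cdot r))$. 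Because $(M,\Psi)$ is a genuine quotient datum, $\Psi(h\cdot r)\in(k^H)^\times$. This is exactly the defining condition for $r$ to lie in the $N$ of the corollary, so $r\in N$, giving $M\subseteq N$ and therefore $M=N$, $\Psi=\Phi$. This proves $(N,\Phi)$ is maximal.

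The key mechanism I am exploiting is that the set $N$ in the corollary is \emph{a priori the largest conceivable} carrier of an element of $\mathcal E(\rho)$: any $(M,\Psi)\in\mathcal E(\rho)$ must have its $H$-stable underlying group $M$ satisfy precisely the membership test defining $N$, so $M\subseteq N$ automatically. Thus the only way $(N,\Phi)$ could fail to be maximal is if $N$ itself failed to underlie an element of $\mathcal E(\rho)$ — which is exactly the hypothesis we are granted, namely the existence of $\Phi$ with $(N,\Phi)\in\mathcal E(\rho)$. I expect the main technical point, and the only place requiring genuine care, to be checking that $\Psi(h\cdot r)$ is invertible in $k^H$ for $r\in M$; this is where one uses that $\Psi$ takes values in $(k^H)^\times$ by definition of a quotient datum, together with the fact that the condition defining $N$ demands invertible $f$ for \emph{every} $h$, not just $h=1$.

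Having shown $(N,\Phi)$ is maximal in $\mathcal E(\rho)$, I invoke the final assertion of Proposition \ref{hopfim}: for any maximal element of $\mathcal E(\rho)$ the associated factorization is universal and $k[\Gamma/N]\rtimes_\Phi k^H$ is isomorphic to the Hopf image of $\rho$. This completes the proof. The argument is short precisely because Proposition \ref{hopfim} has already done the hard analytic work (the application of Lemma \ref{iso} and the identification $M=M'$); the corollary merely provides a concrete, checkable criterion — invertibility of $f$ across all $H$-translates — that pins down the maximal element explicitly rather than through Zorn's Lemma.
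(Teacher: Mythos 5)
Your proof is correct and follows essentially the same route as the paper: the paper likewise observes that any $(M,\Psi)\in\mathcal E(\rho)$ satisfies $M\subset N$ (by $H$-stability of $M$ and invertibility of the values of $\Psi$), so $(N,\Phi)$ is maximal in $\mathcal E(\rho)$, and then applies Proposition \ref{hopfim}. Your write-up merely makes explicit the inclusion $M\subseteq N$ that the paper states without detail.
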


\begin{proof}
 For $(M,\Psi) \in  \mathcal E(\rho)$, we have $M \subset N$. Hence if $(N,\Phi)\leq (M,\Psi)$, then $N=M$ and $\Phi=\Psi$. This shows that $(N,\Phi)$ is maximal, and the previous result finishes the proof. 
\end{proof}

\begin{corollary}\label{corohopfim2}
 Let $H\curvearrowright\Gamma$ as above and let $\rho :  k[\Gamma] \rtimes k^H  \rightarrow R$
be a representation such that $\rho_{|k^H}$ is faithful. Let
$$N_0= \{ r \in \Gamma \ | \ \exists f \in k^H \ {\rm with} \ \rho(r\#1)=\rho(1\#f)\}$$
This is a normal subgroup of $\Gamma$, and the faithfulness assumption on $\rho_{|k^H}$ yields a group morphism $\Phi : N_0 \rightarrow (k^H)^\times$ such that $\rho(r\#1)=\rho(1\#\Phi(r))$ for any $r \in N_0$.
Now put
$$N= \{r \in N_0 \ | \ \forall h,k,l \in H, \ h.r \in N_0 \ {\rm and} \ 
\Phi(k.r)(lh)=\Phi((l^{-1}h).r)(h)\Phi(k.r)(l)\}$$
Then $N$ a normal and $H$-stable subgroup of $\Gamma$, $(N,\Phi) \in \mathcal E(\rho)$ and the Hopf image of $\rho$ is isomorphic with $k[\Gamma/N] \rtimes_\Phi k^H$.
\end{corollary}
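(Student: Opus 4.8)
The plan is to verify the three asserted properties of the set $N$ in sequence, and then deduce the identification of the Hopf image by reducing to Corollary \ref{corohopfim}. First I would set up the basic facts about $N_0$ and $\Phi$. That $N_0$ is a normal subgroup follows exactly as in Proposition \ref{quotients}: if $r \in N_0$ with $\rho(r\#1)=\rho(1\#f)$, then for $s\in\Gamma$ one conjugates by $\rho(s\#\delta_1)$ (which is invertible with inverse $\rho(s^{-1}\#\delta_1)$ up to the central part $k^H$, since the multiplication in $k[\Gamma]\rtimes k^H$ restricts on $k[\Gamma]\#\delta_1\cdot(\text{units})$ to the group structure) to get $srs^{-1}\in N_0$ with the same associated function, whence the relation $\Phi(r)=\Phi(srs^{-1})$. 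The group morphism property of $\Phi$ is immediate from the faithfulness of $\rho_{|k^H}$: the product $\rho(r\#1)\rho(s\#1)=\rho(rs\#1)$ forces $\Phi(rs)=\Phi(r)\Phi(s)$ once $r,s,rs\in N_0$, and one checks $rs\in N_0$ directly.

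Next I would show that $N$ is normal and $H$-stable. The $H$-stability is essentially built into the definition: the condition defining $N$ quantifies over all $h\in H$, so if $r\in N$ then $h_0.r$ again satisfies the defining conditions (the quantifier $\forall h$ absorbs the shift $h\mapsto hh_0$, and the cocycle-type identity $\Phi(k.r)(lh)=\Phi((l^{-1}h).r)(h)\Phi(k.r)(l)$ is stable under replacing $r$ by $h_0.r$ after reindexing $k\mapsto kh_0^{-1}$). For normality, if $r\in N$ and $s\in\Gamma$, then $h.(srs^{-1})=(h.s)(h.r)(h.s)^{-1}$ lies in $N_0$ because $h.r\in N_0$ and $N_0$ is normal, and the cocycle identity is preserved since $\Phi$ is conjugation-invariant, giving $\Phi(k.(srs^{-1}))=\Phi(k.r)$. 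Then I would verify $(N,\Phi)\in\mathcal E(\rho)$, i.e.\ that $(H,N,\Phi)$ is a genuine quotient datum and that $\rho(r\#1)=\rho(1\#\Phi(r))$ holds on $N$: the latter is inherited from $N\subset N_0$, while the two defining conditions of a quotient datum (Definition, item (3)) are exactly the cocycle relation $\Phi(r)(lh)=\Phi(l^{-1}.r)(h)\Phi(r)(l)$, obtained by specialising $k=1$ in the defining condition of $N$, together with $\Phi(r)=\Phi(srs^{-1})$, already established.

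The main obstacle, and the crux of the argument, is the final identification: I must show that this particular $(N,\Phi)$ is the maximal element of $\mathcal E(\rho)$, so that Proposition \ref{hopfim} applies. The cleanest route is to show $N$ coincides with the subgroup of Corollary \ref{corohopfim}, namely $N^{\sharp}=\{r\in\Gamma\mid\forall h\in H,\ \exists f\in(k^H)^\times,\ \rho(h.r\#1)=\rho(1\#f)\}$. The inclusion $N\subset N^{\sharp}$ is clear since $r\in N$ forces $h.r\in N_0$ for all $h$. For the reverse inclusion the key point is that membership in $N^{\sharp}$ already forces the cocycle identity automatically: given $r\in N^{\sharp}$, each $k.r\in N_0$ so $\Phi(k.r)$ is defined, and running the coproduct computation of Proposition \ref{quotients} on $\rho(k.r\#1)=\rho(1\#\Phi(k.r))$ and comparing with $\Delta\rho(1\#\Phi(k.r))$ yields precisely the identity $\Phi(k.r)(lh)=\Phi((l^{-1}h).r)(h)\Phi(k.r)(l)$ by faithfulness of $\rho_{|k^H}$. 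Hence $N^{\sharp}\subset N$, so $N=N^{\sharp}$, and Corollary \ref{corohopfim} gives that the Hopf image is $k[\Gamma/N]\rtimes_\Phi k^H$. I expect the delicate bookkeeping to lie in this coproduct comparison, where one must carefully track the group-element shifts $l^{-1}h$ inside the argument of $\Phi$ exactly as in the display chain of Proposition \ref{quotients}.
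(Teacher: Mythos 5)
Your preliminary steps (that $N_0$ is normal, that faithfulness of $\rho_{|k^H}$ defines $\Phi$, that $N$ is normal and $H$-stable, and that $(N,\Phi)\in\mathcal E(\rho)$) are correct and amount to the ``direct verification'' the paper leaves to the reader, apart from a small slip: you should conjugate by $\rho(s\#1)$, not $\rho(s\#\delta_1)$, since $s\#\delta_1$ is not invertible ($(s\#\delta_1)(s^{-1}\#\delta_1)=1\#\delta_1\neq 1\#1$). The genuine gap is in your crux step, the claimed inclusion $N^{\sharp}\subset N$, i.e.\ the assertion that for $r\in N^{\sharp}$ the cocycle identity ``holds automatically by running the coproduct computation of Proposition \ref{quotients}''. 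That computation is unavailable here: it rests on the identity $\Delta\pi=(\pi\otimes\pi)\Delta$ for a \emph{Hopf algebra map} $\pi$, whereas $\rho$ is merely an algebra map into an algebra $R$ carrying no coalgebra structure, and the failure of $\rho$ to interact with $\Delta$ is precisely what the Hopf image measures. Indeed, if the cocycle identity were automatic on $N_0$, the explicit constraint in the definition of $N$ would be vacuous, and Corollary \ref{corohopfim} would not need its hypothesis that some $\Phi$ with $(N,\Phi)\in\mathcal E(\rho)$ exists. A concrete counterexample: let $H=\mathbb Z_2=\langle h\rangle$ act trivially on $\Gamma=\mathbb Z=\langle g\rangle$, take $R=k^H$ and $\rho(g^n\# f)=f_0^n f$ with $f_0=\delta_1+\lambda\delta_h$ for $\lambda\in k^\times$ not a root of unity. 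Then $\rho_{|k^H}$ is the identity, so $N_0=\Gamma=N^{\sharp}$; but since the action is trivial, the quotient datum condition forces $\Phi(r)\in\widehat{H}$, while $f_0^n=\delta_1+\lambda^n\delta_h$ is a character only for $n=0$. Hence $N$ is trivial and the Hopf image is all of $k[\mathbb Z]\rtimes k^{\mathbb Z_2}$ (infinite-dimensional), whereas your identification $N=N^{\sharp}$ would predict the Hopf image $k^{\mathbb Z_2}$.

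The repair is the paper's route, which bypasses Corollary \ref{corohopfim} entirely and proves maximality of $(N,\Phi)$ in $\mathcal E(\rho)$ directly: for any $(M,\Psi)\in\mathcal E(\rho)$, the faithfulness of $\rho_{|k^H}$ forces $\Psi=\Phi_{|M}$ (uniqueness of the $f$ with $\rho(r\#1)=\rho(1\#f)$); $H$-stability of $M$ gives $h.r\in M\subset N_0$ for all $h$; and the quotient datum axiom applied to the elements $k.r\in M$ yields exactly the displayed cocycle condition defining $N$. Hence $M\subset N$ for \emph{every} element of $\mathcal E(\rho)$, so $(N,\Phi)\leq(M,\Psi)$ forces $N=M$ and $\Phi=\Psi$, i.e.\ $(N,\Phi)$ is maximal, and Proposition \ref{hopfim} concludes. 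Note that this is the same one-line maximality argument used to prove Corollary \ref{corohopfim} itself, so nothing is lost by not routing through it; the difference is that here the membership $(N,\Phi)\in\mathcal E(\rho)$ is verified from the definition of $N$ (which encodes the quotient datum axioms explicitly), rather than assumed.
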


\begin{proof}
It is a direct verification to check that $N$ is a normal and $H$-stable subgroup of $\Gamma$, that 
$(N,\Phi) \in {\rm QD}(H\curvearrowright\Gamma)$ and hence that $(N,\Phi) \in \mathcal E(\rho)$.
 For $(M,\Psi) \in  \mathcal E(\rho)$, we have $M \subset N$. Hence if $(N,\Phi)\leq (M,\Psi)$, then $N=M$ and $\Phi=\Psi$. This shows that $(N,\Phi)$ is maximal, and Proposition \ref{hopfim} finishes the proof. 
\end{proof}

\section{Examples}\label{sec:exhopim}

We  illustrate the results of the previous section using the examples of Section \ref{sec:example}. 
We assume that $k$ has characteristic zero here.

\subsection{Construction of the representations}
Let $M,N \geq 2$. As in \cite{bb3}, we fix a matrix $Q =(Q_{ic})=\in M_{MN}(k^\times)$ with
$Q_{0c}=1=Q_{i0}$ for any $i$, $c$ (the indices are taken modulo $M$, $N$, respectively). To $Q$ we associate the matrix $\theta=(\theta_{ic}) \in M_{MN}(k^\times)$ defined by 
$\theta_{ic}=\frac{Q_{i-1,c}Q_{i,c-1}}{Q_{ic}Q_{i-1,c-1}}$. We have
$$\prod_{c=0}^{N-1}\theta_{ic}=1=\prod_{j=0}^{M-1}\theta_{jd}, \ 0 \leq i \leq M-1, \ 0 \leq d \leq N-1$$
We denote by $\epsilon_0, \ldots , \epsilon_{N-1}$ the canonical basis of $k^{N}$. 
We consider the Hopf algebra $k[\Gamma_{M,N}]\rtimes k^{\mathbb Z_M}$ of Subsection \ref{subsec:main} and 
we will be interested in the representation 
$$\rho_Q : k[\Gamma_{M,N}]\rtimes k^{\mathbb Z_M} \longrightarrow {\rm End}(k^N)$$
defined as follows: for $0 \leq i \leq M-1$, we have 
$$\rho_{Q}(g_i\#1)(\epsilon_c) = \theta_{ic} \epsilon_{c-1}$$
and for $f \in k^{\mathbb Z_M}$, we have 
$$\rho_Q(1\#f) =f(h){\rm id}, \ {\rm where} \ \mathbb Z_M=\langle h \rangle$$ 
The representation $\rho_Q$ is a constituent of the representation $\pi_Q$ in \cite{bb3}, to which we will restrict here
(note however that inner faithfulness of $\rho_Q$ implies inner faithfulness of $\pi_Q$).

It is clear that $\rho_{Q_{|k^{\mathbb Z_M}}}$ is inner faithful, so we can use the statements of the previous section.

Recall \cite{bb3} that we say that $p_1,\ldots ,p_m\in k^\times$ are root independent if for any $r_1,\ldots, r_m\in\mathbb Z$:
$$p_1^{r_1}\ldots p_m^{r_m}=1\implies r_1=\ldots=r_m=0$$
It is shown in \cite{bb3} that if the elements $Q_{ic}$, $1\leq i\leq M-1$, $1 \leq c \leq N-1$ are root independent, then the representation $\rho_Q$ is inner faithful.
Our main aim is to show that, at least in some situations, the root independence assumption can be weakened, as follows.

\begin{theorem}\label{nonroot}
 Assume that $M=2$ and $N$ is prime, or that $M$ is prime and $N=2$. If one the elements $Q_{ic}$ is not a root of unity, then the representation $\rho_Q$ is inner faithful.
\end{theorem}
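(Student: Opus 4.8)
The plan is to reduce the statement to the combination of the structural Theorem \ref{infiquot} and the explicit description of the Hopf image provided by Proposition \ref{hopfim}. I would argue by contradiction: suppose $\rho_Q$ is \emph{not} inner faithful, so its Hopf image $A=A_{\rho_Q}$ is a proper quotient of $k[\Gamma_{M,N}]\rtimes k^{\mathbb Z_M}$, i.e.\ the universal factorization map $\pi : k[\Gamma_{M,N}]\rtimes k^{\mathbb Z_M}\to A$ is surjective but not an isomorphism. Since $\rho_{Q|k^{\mathbb Z_M}}$ is inner faithful, Proposition \ref{hopfim} tells us that $A\simeq k[\Gamma_{M,N}/N]\rtimes_\Phi k^{\mathbb Z_M}$ for a maximal $(N,\Phi)\in\mathcal E(\rho_Q)$, and in particular $N\neq\{1\}$ (otherwise $\pi$ would be an isomorphism). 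The strategy is then to show that under the hypotheses on $M,N$, the existence of such a nontrivial $N$ forces all the $Q_{ic}$ to be roots of unity, contradicting the hypothesis that one of them is not.

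Here is how I would extract that consequence. By the dichotomy in the proof of Theorem \ref{infiquot}, either $A$ is finite-dimensional or $A$ is cocommutative. I would handle these in parallel with a single computation on the defining formulas of $\rho_Q$. The key point is to translate the membership relation $r\in N$, which by definition of $\mathcal E(\rho_Q)$ means $\rho_Q(r\#1)=\rho_Q(1\#\Phi(r))$, into a concrete equation in $\operatorname{End}(k^N)$. Since $\rho_Q(1\#f)=f(h)\,\mathrm{id}$ is always a scalar, and $\rho_Q(g_i\#1)$ acts by $\epsilon_c\mapsto\theta_{ic}\epsilon_{c-1}$, a general element $r\in T=\mathbb Z^{(M-1)(N-1)}$ acts diagonally (products of the commuting $g_i^{\pm}$-cycles land in $T$ and shift the index $c$ by a full period, hence act diagonally), with diagonal entries given by monomials in the $\theta_{ic}$, equivalently (after the change of variables $\theta_{ic}=Q_{i-1,c}Q_{i,c-1}/Q_{ic}Q_{i-1,c-1}$) by monomials in the $Q_{ic}$. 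The condition $r\in N$ then says precisely that this diagonal operator is a \emph{scalar}, and since it is diagonal, that all its diagonal entries coincide. I would compute these entries explicitly for the generators $a_{ic}^{\,m}$ of $N$ provided by Theorem \ref{infiquot}: recall that in the case $M=2$ (resp.\ $N=2$), the nontrivial $N$ was shown there to contain a free abelian subgroup of full rank $N-1$ (resp.\ $M-1$), so $N$ contains some nonzero power of each lattice generator. Writing out that the corresponding diagonal operator is scalar yields a system of multiplicative relations among the $Q_{ic}$ of the form $Q_{ic}^{\,\kappa}=(\text{root of unity})$ for suitable nonzero integers $\kappa$, from which every $Q_{ic}$ is a root of unity.

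The main obstacle I anticipate is the bookkeeping in this last step: one must verify that the relations forced by ``$\rho_Q(r\#1)$ is scalar'' are nondegenerate enough to constrain \emph{every} $Q_{ic}$ (not merely some combination of them) to be a root of unity. This is exactly where the root-independence hypothesis of \cite{bb3} gets \emph{weakened} rather than discarded: instead of demanding that no nontrivial monomial in the $Q_{ic}$ equals $1$, we only need that some nonzero integer power of each individual $Q_{ic}$ is a root of unity, which is equivalent to $Q_{ic}$ itself being a root of unity. I expect the primality of $N$ (resp.\ $M$) and the $2$ in the other slot to be used precisely as in Lemma \ref{lem:simple} and Lemma \ref{linearalg}: primality guarantees that the lattice $N$ is $\mathbb Z_N$- and $\mathbb Z_M$-stable of full rank as soon as it is nonzero, and the companion-matrix computation of Lemma \ref{linearalg} guarantees that the orbit of a single nonzero vector spans, so that the scalar condition on one generator propagates to all index pairs $(i,c)$. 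Once every $Q_{ic}$ is shown to be a root of unity, the contrapositive gives inner faithfulness whenever some $Q_{ic}$ is not a root of unity, completing the proof.
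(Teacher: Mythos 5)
Your proposal is correct in substance, but it runs the argument in the opposite direction from the paper and bypasses one of its intermediate devices. The paper argues forward: Lemma \ref{lem:roots}(2) shows that if some $Q_{ic}$ is not a root of unity then the group $I_Q^0$ is infinite; a corollary of Lemma \ref{lem:hopfimgamma} then gives that the Hopf image $k[\Gamma_{M,N}/U]\rtimes_\Phi k^{\mathbb Z_M}$ is infinite-dimensional (since $[T:U]\geq[T:N_Q]=|I_Q^0|$); Theorem \ref{infiquot} then forces either $U=1$ or cocommutativity, and the cocommutative branch is killed by a separate observation (trivial $\mathbb Z_M$-action on the quotient plus cyclic permutation of the generators forces $\Gamma_{M,N}/U$ to be finite cyclic, contradicting infinite-dimensionality). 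You instead take the contrapositive: non-inner-faithfulness gives $U\neq 1$, the internal lemmas of Theorem \ref{infiquot} (Lemma \ref{lem:simple} --- applicable in both hypothesis cases since $2$ is prime, so $M$ and $N$ are both prime --- and the companion-matrix Lemma \ref{linearalg}) force $T/(U\cap T)$ to be finite, so a nonzero power of each $a_{ic}$ lies in $U$ and acts as a scalar under $\rho_Q$; since $\prod_{d=0}^{N-1}\theta_{i,c+d}\theta_{0,c+d}^{-1}=1$, that scalar is an $N$-th root of unity, and the resulting relations reproduce exactly the arithmetic of Lemma \ref{lem:roots}(2), forcing every $Q_{ic}\in\mu_\infty$, a contradiction. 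A pleasant feature of your route is that the dichotomy ``finite-dimensional or cocommutative'' you invoke at the start is actually superfluous: once $U\neq 1$ yields a finite-index lattice, both branches collapse into the same scalar computation, so you never need the paper's separate cyclic-action argument; what you lose is the reusable intermediate statement (the corollary that $I_Q^0$ infinite implies the Hopf image is infinite-dimensional), which the paper exploits again in its small-index examples. Two small imprecisions to repair when writing this up: the full-rank claim for the kernel lattice holds only after Lemma \ref{lem:simple} disposes of the case $U\not\subset T$ (where finiteness of the quotient comes for free, as ``finite abelian''), and the propagation of the scalar condition to \emph{all} index pairs $(i,c)$ comes from finiteness of $T/(U\cap T)$ (each generator has finite order modulo $U$), not from the orbit of a single vector spanning; with those adjustments your plan goes through.
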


%It is of course tempting to conjecture that the statement holds without any assump

\subsection{Preliminaries and notation} We now develop some preliminary material. We retain the previous notation.
For $R=(R_{ic})$, $1\leq i \leq M-1$, $1 \leq c \leq N-1$, $R_{ic} \in \mathbb Z$, put $S_{jc}= R_{jc} + \sum_{i=1}^{M-1}R_{ic}$,
$$\alpha(R,0)=  \left(\prod_{j=1}^{M-1}\prod_{c=1}^{N-1}\theta_{j,c}^{S_{j,c}}\right)$$
 and for $1 \leq d \leq N-1$,
$$\alpha(R, d)= \left(\prod_{j=1}^{M-1}\theta_{j,-d}^{-S_{j,-d}}\right) \left(\prod_{j=1}^{M-1}\prod_{c=1, c\not= -d}^{N-1}\theta_{j,c}^{S_{j,c-d}-S_{j,-d}}\right) $$
The following result is a direct verification.

\begin{lemma}
 For any $0\leq  d \leq N-1$, the map 
\begin{align*}
 \alpha(- ,d) :  \mathbb Z^{(M-1)(N-1)}&\longrightarrow k^\times \\
R & \longmapsto \alpha(R,d)
\end{align*}
is a group morphism.
\end{lemma}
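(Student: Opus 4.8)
The plan is to observe that, for each fixed $d$, the value $\alpha(R,d)$ is by definition a finite product of factors of the shape $\theta^{\lambda(R)}$, where $\theta \in k^\times$ is one of the fixed scalars $\theta_{jc}$ and $\lambda$ is an integer-linear functional of $R$; since each such factor is a group morphism into the abelian group $k^\times$ and a finite product of group morphisms into an abelian group is again a group morphism, the claim follows at once.

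First I would record the elementary fact underlying everything: for a fixed $\theta \in k^\times$ and a fixed group morphism $\lambda : \mathbb Z^{(M-1)(N-1)} \to \mathbb Z$, the assignment $R \mapsto \theta^{\lambda(R)}$ is a group morphism $\mathbb Z^{(M-1)(N-1)} \to k^\times$, because $\theta^{\lambda(R+R')} = \theta^{\lambda(R)+\lambda(R')} = \theta^{\lambda(R)}\theta^{\lambda(R')}$. As $k^\times$ is abelian, any finite product of morphisms $\mathbb Z^{(M-1)(N-1)} \to k^\times$ is again such a morphism.

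Next I would verify that every exponent occurring in the formulas for $\alpha(R,d)$ is $\mathbb Z$-linear in $R$. The starting point is that $S_{jc} = R_{jc} + \sum_{i=1}^{M-1} R_{ic}$ is visibly $\mathbb Z$-linear in the entries of $R$, for each pair $(j,c)$ (the second index read modulo $N$). Hence for $d=0$ each exponent $S_{j,c}$ is $\mathbb Z$-linear, while for $1 \leq d \leq N-1$ each exponent $-S_{j,-d}$ and each $S_{j,c-d}-S_{j,-d}$ is a $\mathbb Z$-linear combination of the $S_{j,c}$, hence again $\mathbb Z$-linear in $R$, since the $\mathbb Z$-linear functionals on $\mathbb Z^{(M-1)(N-1)}$ are closed under addition and subtraction.

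Combining the two observations, in either case $d=0$ or $1 \leq d \leq N-1$ the value $\alpha(R,d)$ is a finite product $\prod_t \theta_t^{\lambda_t(R)}$ over the index set in the definition, with each $\lambda_t$ a $\mathbb Z$-linear functional, so $\alpha(-,d)$ is a group morphism. There is no genuine obstacle here; the only point requiring a moment's care is the bookkeeping of the index conventions (indices read modulo $N$) and the confirmation that the shifted and subtracted exponents remain $\mathbb Z$-linear, both of which are immediate.
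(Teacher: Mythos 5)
Your proof is correct, and it is precisely the ``direct verification'' the paper invokes without writing out: since each exponent $S_{j,c}$, $-S_{j,-d}$, $S_{j,c-d}-S_{j,-d}$ is $\mathbb Z$-linear in $R$ and $k^\times$ is abelian, $\alpha(-,d)$ is a finite product of morphisms $R \mapsto \theta_{jc}^{\lambda(R)}$ and hence a group morphism. Nothing more is needed.
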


There is moreover an action of $\mathbb Z_M=\langle h \rangle$ on $\mathbb Z^{(M-1)(N-1)}$ given on the standard basis $\epsilon_{ic}$, $1 \leq i \leq M-1$, $1 \leq c \leq N-1$,  by $h\cdot \epsilon_{ic}=\epsilon_{i,c+1}-\epsilon_{1,c}$ (the indices are taken modulo $M$, $N$). 
This is in fact the same action as the one in Lemma \ref{lem:gammaMN}, but written additively. For $0 \leq l \leq M-1$ and $R=(R_{ic})\in \mathbb Z^{(M-1)(N-1)} $, we note $l\cdot R = h^l\cdot R$.

\begin{definition}
For $0 \leq l \leq M-1$, the groups $E_Q^l\subset \mathbb Z^{(M-1)(N-1)}$ and $I_Q^l\subset (k^\times)^{N-1} $ are the respective kernel and image of the group morphism
\begin{align*}
 \mathbb Z^{(M-1)(N-1)}&\longrightarrow (k^\times)^{N-1} \\
R & \longmapsto \left(\alpha(l\cdot R,0)\alpha(l\cdot R,d)^{-1}\right)_{1 \leq d \leq N-1}
\end{align*}
and we put $E_Q = \cap_{l=0}^{M-1}E_Q^l$
\end{definition}

\begin{lemma}\label{lem:roots}
\begin{enumerate}
 \item If the elements $Q_{ic}$, $1\leq i\leq M-1$, $1 \leq c \leq N-1$ are root independent, then $E_Q^0=(0)=E_Q$.
\item If one of the elements $Q_{ic}$ is not a root of unity, then the group $I_Q^0$ is infinite.
\end{enumerate}
\end{lemma}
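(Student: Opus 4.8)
The plan is to prove the two statements separately, as they concern different maps and use different kinds of information about the matrix $Q$.

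For part (1), the goal is to show $E_Q^0 = (0)$, which by definition means the group morphism $R \mapsto (\alpha(R,0)\alpha(R,d)^{-1})_{1\leq d \leq N-1}$ is injective on $\mathbb Z^{(M-1)(N-1)}$. First I would unpack the definition of $\alpha(R,d)$ in terms of the $\theta_{jc}$, and then express each $\theta_{jc}$ back in terms of the $Q_{ic}$ via $\theta_{ic}=\frac{Q_{i-1,c}Q_{i,c-1}}{Q_{ic}Q_{i-1,c-1}}$. The key observation is that, under the root independence hypothesis, a product $\prod Q_{ic}^{n_{ic}}$ equals $1$ only when all exponents $n_{ic}$ vanish; hence it suffices to show that if $R \in E_Q^0$ then the total exponent of each $Q_{ic}$ (for $1\leq i \leq M-1$, $1\leq c \leq N-1$) in the full collection of expressions $\alpha(R,0)\alpha(R,d)^{-1}$ is forced to be zero, and that this system of linear equations in the $R_{ic}$ has only the trivial solution. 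I would set up the exponent-counting carefully, using the telescoping relations $\prod_c \theta_{ic}=1=\prod_j \theta_{jd}$, and argue that the resulting integer matrix relating $R$ to the exponents is invertible over $\mathbb Q$. Once $E_Q^0=(0)$ is established, the equality $E_Q=(0)$ is immediate since $E_Q = \cap_l E_Q^l \subset E_Q^0$.

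For part (2), the goal is to show $I_Q^0$ is infinite whenever some $Q_{ic}$ is not a root of unity. Since $I_Q^0$ is the image of a group morphism from a finitely generated free abelian group into $(k^\times)^{N-1}$, it is finite precisely when this image consists of torsion elements, i.e. when every coordinate $\alpha(R,0)\alpha(R,d)^{-1}$ is a root of unity for all $R$. The plan is to show that if $I_Q^0$ were finite, then all the $\theta_{jc}$, and in turn all the $Q_{ic}$, would be roots of unity, contradicting the hypothesis. Concretely, I would choose $R$ to be a standard basis vector (or a small combination thereof) so that $\alpha(R,0)\alpha(R,d)^{-1}$ reduces to a single $\theta$-monomial, conclude such $\theta_{jc}$ are roots of unity, and then invert the relation between the $\theta_{ic}$ and the $Q_{ic}$ to deduce that each $Q_{ic}$ is a root of unity.

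The main obstacle in both parts is the same piece of bookkeeping: passing cleanly between the $Q$-coordinates, the $\theta$-coordinates, and the $\alpha(R,d)$ expressions, while keeping track of the shifted index $S_{j,c-d}-S_{j,-d}$ that appears in the definition of $\alpha(R,d)$ for $d\geq 1$. In particular, the exponent $S_{jc}=R_{jc}+\sum_{i=1}^{M-1}R_{ic}$ mixes the rows, so the linear-algebra computation underlying injectivity in part (1) requires verifying that the composite integer matrix (first $R \mapsto S$, then $S \mapsto$ exponents of the $\theta$'s, then $\theta \mapsto$ exponents of the $Q$'s) is nonsingular over $\mathbb Q$. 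I expect the cyclic/telescoping structure of the relations to make this matrix block-circulant or triangular after a suitable reindexing, so the determinant computation should be tractable, but this is where the care is needed. For part (2) the analogous difficulty is milder, since I only need a single nontrivial monomial relation rather than full injectivity.
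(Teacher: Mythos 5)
Part (1) of your plan is sound and is essentially the paper's own route, just packaged differently: the paper first transfers root independence from the $Q_{ic}$ to the $\theta_{ic}$, $1\leq i\leq M-1$, $1\leq c\leq N-1$ (the change of exponent lattices coming from $\theta_{ic}=Q_{i-1,c}Q_{i,c-1}Q_{ic}^{-1}Q_{i-1,c-1}^{-1}$ with $Q_{0c}=1=Q_{i0}$ is unipotent triangular, hence invertible over $\mathbb Z$), and then the vanishing of the $\theta$-exponents of $\alpha(R,0)\alpha(R,d)^{-1}$ is immediate to exploit: the factor $\theta_{j,N-d}$ occurs with exponent $2S_{j,N-d}$, so $S_{jc}=0$ for all $j,c$, and then $\sum_{j}S_{jc}=M\sum_i R_{ic}=0$ forces $R=0$. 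Your "composite matrix" formulation would work, with one caveat: the map $R\mapsto(\text{exponent vectors for all }d)$ goes from $\mathbb Z^{(M-1)(N-1)}$ to a larger lattice, so the claim is injectivity (full column rank), not nonsingularity of a square matrix. Your reduction $E_Q=\cap_l E_Q^l\subset E_Q^0$ is exactly right.

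Part (2), however, contains a step that fails as stated. Evaluating at a standard basis vector $R=\epsilon_{ic}$ and using $\prod_{j=0}^{M-1}\theta_{jc}=1$, the $d$-th coordinate of the morphism is $\theta_{ic}\theta_{0c}^{-1}\theta_{i,c+d}^{-1}\theta_{0,c+d}$: a four-factor monomial that necessarily involves the row-$0$ entries $\theta_{0c},\theta_{0,c+d}$, which lie outside the index range $1\leq i\leq M-1$. No choice of $R$ produces a single $\theta_{jc}$, so you cannot directly "conclude such $\theta_{jc}$ are roots of unity"; and your closing remark that "a single nontrivial monomial relation" suffices is wrong in principle, because the hypothesis concerns \emph{some unspecified} $Q_{ic}$, so finiteness of $I_Q^0$ must be shown to force \emph{every} $Q_{ic}$ into the roots of unity $\mu_\infty$. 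The missing idea — which is the actual content of the paper's proof of (2) — is to aggregate the whole family of these four-term relations via the telescoping identities $\prod_{c}\theta_{ic}=1=\prod_{j}\theta_{jd}$: the product over $i=1,\dots,M-1$ of the generators gives $(\theta_{0,c+d}\theta_{0c}^{-1})^{M}\in\mu_\infty$, hence $\theta_{0,c+d}\theta_{0c}^{-1}\in\mu_\infty$; the product over $d=1,\dots,N-1$ then gives $\theta_{0c}^{-N}\in\mu_\infty$, so every $\theta_{0c}$, and consequently every $\theta_{ic}$ (by the same averaging over $d$ applied to $\theta_{ic}\theta_{i,c+d}^{-1}$), is a root of unity; finally $Q_{ic}^{-1}=\prod_{j\leq i}\prod_{d\leq c}\theta_{jd}$ puts every $Q_{ic}$ in $\mu_\infty$, the required contradiction. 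Your preliminary reduction (a finitely generated subgroup of $(k^\times)^{N-1}$ is finite iff its generators are torsion) is correct; it is only this extraction step that needs the repair above.
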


\begin{proof} (1) One checks first that if the elements $Q_{ic}$, $1\leq i\leq M-1$, $1 \leq c \leq N-1$ are root independent, then so are the elements $\theta_{ic}$, $1\leq i\leq M-1$, $1 \leq c \leq N-1$, and then the verification that $E_Q^0=(0)$ is immediate using the root independence of those elements.
 
(2) Using the standard basis of the free abelian group 
$\mathbb Z^{(M-1)(N-1)}$, we see that $I_Q^0$ is the subgroup of $(k^\times)^{N-1}$ generated by the elements
$$(\theta_{ic}\theta_{0c}^{-1}\theta_{i,c+d}^{-1}\theta_{0,c+d})_{1\leq d \leq N-1}, \ 1 \leq i \leq M-1, \ 1 \leq c \leq N-1$$
Denote by $\mu_{\infty}$ the group of roots of unity in $k^\times$ and assume that $I_Q^0$ is finite.
Then for any $1 \leq i\leq M-1$ and $1 \leq c, d \leq N-1$ we have
$$\theta_{ic}\theta_{0c}^{-1}\theta_{i,c+d}^{-1}\theta_{0,c+d} \in \mu_\infty$$
and in particular for any $1 \leq c, d \leq N-1$, we have
$$\prod_{i=1}^{N-1}\theta_{ic}\theta_{0c}^{-1}\theta_{i,c+d}^{-1}\theta_{0,c+d}= 
\theta_{0,c}^{-N} \theta_{0,c+d}^N \in \mu_\infty \Rightarrow \theta_{0,c+d}\theta_{0,c}^{-1}\in \mu_\infty$$
Then we have for any $1 \leq c \leq N-1$
$$\prod_{d=1}^{N-1} \theta_{0,c+d}\theta_{0,c}^{-1}= \theta_{0,c}^{-N} \in \mu_{\infty}\Rightarrow \theta_{0,c} \in \mu_{\infty}$$
From this we deduce easily that $\theta_{ic} \in \mu_\infty$ for any $i,c$, and then that $Q_{ic} \in \mu_\infty$ for any $i,c$ as well.
\end{proof}

\subsection{}
We come back to the study of the representation $\rho_Q$.
According to Proposition \ref{hopfim} and Corollary \ref{corohopfim}, we need to study the group 
\begin{align*}N_Q &= \{ r \in \Gamma_{M,N} \ | \ \forall y \in \mathbb Z_M, \ \exists f \in (k^{\mathbb Z_M})^{\times} \ {\rm with} \ \rho_Q(y\cdot r \# 1)=f(h)1\} \\
   &= \{ r \in \Gamma_{M,N} \ | \ \forall y \in \mathbb Z_M, \ \exists \lambda \in k^* \ {\rm with} \ \rho_Q(y\cdot r \# 1)=\lambda 1\}
\end{align*}

\begin{lemma}\label{lem:hopfimgamma} 
 The subgroup $N_Q$ is the subgroup of $T=\langle a_{ic}, \ 1\leq i \leq M-1, \ 1 \leq c \leq N-1 \rangle$ formed by elements
$$a= \prod_{i=1}^{M-1}\prod_{c=1}^{N-1}a_{ic}^{R_{ic}}$$
for which we have $R=(R_{ic}) \in E_Q$. Moreover the Hopf image of $\rho_Q$ is isomorphic to  
$$k[T/U\rtimes \mathbb Z_N] \rtimes_\Phi k^{\mathbb Z_M}$$ for some quotient datum $(U,\Phi)$, where $U \subset N_Q$.
\end{lemma}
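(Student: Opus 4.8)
The plan is to reduce the statement to an explicit diagonal computation for $\rho_Q$ on $T$ and then feed the resulting subgroup into Proposition \ref{hopfim}. First I would observe that $N_Q \subset T$: any $r \in \Gamma_{M,N}$ whose image in $\mathbb Z_N$ is a nontrivial power $t^k$ acts on $k^N$ as a diagonal operator composed with the cyclic shift $\epsilon_c \mapsto \epsilon_{c-k}$, which is never a scalar since $N \geq 2$. Taking $y = 1$ in the definition of $N_Q$ forces $r$ itself to act as a scalar, hence $r \in T$. So it suffices to analyze elements $a = \prod_{i,c} a_{ic}^{R_{ic}} \in T$.

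For such $a$, I would compute the diagonal action. Using $a_{ic} = g_0^{c-1} g_i g_0^{-c}$ (Lemma \ref{lem:gammaMN}) and the defining formula $\rho_Q(g_i\#1)(\epsilon_c) = \theta_{ic}\epsilon_{c-1}$, together with the column relations $\prod_{j=0}^{M-1}\theta_{jd}=1$, a direct calculation shows that $\rho_Q(a\#1)$ is diagonal and that, after telescoping the $\theta_{0,\bullet}$-factors, its $d$-th diagonal entry differs from $\alpha(R,d)$ only by a factor independent of $d$. Hence $\rho_Q(a\#1)$ is a scalar if and only if $\alpha(R,0)=\alpha(R,d)$ for all $1 \le d \le N-1$, that is, $R \in E_Q^0$. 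Since replacing $a$ by $h^l\cdot a$ replaces $R$ by $l\cdot R$ on exponent vectors, the element $\rho_Q((h^l\cdot a)\#1)$ is scalar precisely when $R \in E_Q^l$. Intersecting over all $l$ gives $a \in N_Q \iff R \in \bigcap_{l=0}^{M-1} E_Q^l = E_Q$, which is the first assertion (and is consistent with Lemma \ref{lem:roots}(1), where root independence forces $E_Q=0$ and hence $N_Q$ trivial).

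For the Hopf image I would invoke Proposition \ref{hopfim}: choose a maximal element $(U,\Phi) \in \mathcal E(\rho_Q)$, so that the Hopf image is $k[\Gamma_{M,N}/U]\rtimes_\Phi k^{\mathbb Z_M}$. It remains to identify this quotient. For any $(U,\Phi) \in \mathcal E(\rho_Q)$ and any $r \in U$, the element $\rho_Q(r\#1) = \rho_Q(1\#\Phi(r))$ is a scalar; since $U$ is $\mathbb Z_M$-stable, the same holds for every $y\cdot r$, so $U \subset N_Q$, and by the previous step $U \subset T$. As $(U,\Phi)$ is a quotient datum, $U$ is a normal subgroup of $\Gamma_{M,N}=T\rtimes\mathbb Z_N$ contained in $T$, hence $\mathbb Z_N$-stable, and the quotient splits as $\Gamma_{M,N}/U \cong (T/U)\rtimes\mathbb Z_N$. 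This yields the stated form $k[(T/U)\rtimes\mathbb Z_N]\rtimes_\Phi k^{\mathbb Z_M}$ with $U \subset N_Q$.

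The main obstacle is not conceptual but the explicit diagonal computation matching the $d$-th entry of $\rho_Q(a\#1)$ with $\alpha(R,d)$; this is the only genuinely computational step, though it is ``direct'' in the sense already used repeatedly in the paper. A secondary point worth emphasizing is that one should \emph{not} expect $U = N_Q$: a compatible $\Phi$ satisfying the quotient-datum conditions need not exist on all of $N_Q$, which is exactly why the statement asserts only $U \subset N_Q$ and leaves the precise datum $(U,\Phi)$ to be pinned down separately.
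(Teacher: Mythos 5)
Your proposal is correct and takes essentially the same route as the paper: the paper's proof likewise rests on the diagonal computation $\rho_Q(a_{ic}\#1)(\epsilon_d)=\theta_{i,c+d}\theta_{0,c+d}^{-1}\epsilon_d$ (whence the $d$-th eigenvalue of $\rho_Q(a\#1)$ is $\alpha(R,d)$, giving $N_Q\cap T$ via $E_Q^0$ and then $E_Q$ by applying the $\mathbb Z_M$-action), followed by the same appeal to Proposition \ref{hopfim} to get a maximal $(U,\Phi)\in\mathcal E(\rho_Q)$ with $U\subset N_Q\subset T$ and the splitting $\Gamma_{M,N}/U\simeq (T/U)\rtimes\mathbb Z_N$. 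Your extra details (the shift argument for $N_Q\subset T$, the explicit $\mathbb Z_N$-stability of $U$, and the caveat that $U=N_Q$ need not hold) simply flesh out steps the paper dismisses as easy.
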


\begin{proof}
One sees easily that an element in $N_Q$ belongs to $T$, and we have $\rho_Q(a_{ic}\#1)(\epsilon_d) = \theta_{i,c+d}\theta_{0,c+d}^{-1}\epsilon_d$. From this we see that for $a$ as above, we have
$\rho_Q(a\#1)(\epsilon_d) = \alpha(R,d) \epsilon_d$, and hence $N_Q$ is indeed the announced subgroup. By Proposition \ref{hopfim}, the Hopf image of $\rho_Q$ is isomorphic to 
$k[\Gamma_{M,N}/U] \rtimes_\Phi k^{\mathbb Z_M}$ for some subgroup $U \subset N_Q$, with $\Gamma_{M,N}/U = T/U\rtimes \mathbb Z_N$ by the first assertion.
\end{proof}

From this, choosing $Q$ such that $E_Q=(0)$, we see that $T$ is indeed free abelian on the elements $a_{ic}$, $1\leq  i \leq M-1$, $1 \leq c \leq N-1$ (Lemma \ref{lem:gammaMN}). In general we also see that the groups $E_Q$ and $N_Q$ are isomorphic, and that $\mathbb Z^{(M-1)(N-1)}/E_Q\simeq T/N_Q$. 

From this, we first recover Theorem 4.6 from \cite{bb3} in the case of cyclic groups.

\begin{corollary}\label{cor:vanisheq}
 If $E_Q=(0)$, then the representation $\rho_Q$ is inner faithful.
\end{corollary}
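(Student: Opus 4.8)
The plan is to deduce Corollary \ref{cor:vanisheq} directly from Lemma \ref{lem:hopfimgamma} together with Proposition \ref{hopfim} and Corollary \ref{corohopfim}. The key observation is that inner faithfulness of $\rho_Q$ is equivalent to the Hopf image being all of $k[\Gamma_{M,N}]\rtimes k^{\mathbb Z_M}$, which by Lemma \ref{lem:hopfimgamma} amounts to showing that the subgroup $U$ appearing there is trivial.

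First I would recall the setup: by Lemma \ref{lem:hopfimgamma}, the Hopf image of $\rho_Q$ is isomorphic to $k[T/U \rtimes \mathbb Z_N] \rtimes_\Phi k^{\mathbb Z_M}$ for some subgroup $U \subset N_Q$, where $N_Q$ is the subgroup of $T$ consisting of the elements $a=\prod_{i,c} a_{ic}^{R_{ic}}$ with $R \in E_Q$. Under the hypothesis $E_Q=(0)$, the lemma tells us that $N_Q$ is trivial, since $N_Q \simeq E_Q = (0)$. Because $U \subset N_Q = \{1\}$, we conclude $U=\{1\}$, so the Hopf image is $k[T \rtimes \mathbb Z_N]\rtimes_\Phi k^{\mathbb Z_M} \simeq k[\Gamma_{M,N}]\rtimes_\Phi k^{\mathbb Z_M}$.

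The remaining point is to check that $\Phi$ must be trivial when $U$ is trivial, so that the Hopf image is literally $k[\Gamma_{M,N}]\rtimes k^{\mathbb Z_M}$ and the canonical surjection $q$ is an isomorphism. Here I would note that in the maximal quotient datum $(U,\Phi) \in \mathcal E(\rho_Q)$ produced by Proposition \ref{hopfim}, the morphism $\Phi$ is defined on the subgroup $U$; once $U=\{1\}$, the morphism $\Phi$ is forced to be the trivial morphism sending $1$ to the identity of $(k^{\mathbb Z_M})^\times$. Then $k[\Gamma_{M,N}/\{1\}]\rtimes_\Phi k^{\mathbb Z_M}$ is exactly $k[\Gamma_{M,N}]\rtimes k^{\mathbb Z_M}$, with $q=\mathrm{id}$, and the universal factorization of Proposition \ref{hopfim} shows that $(k[\Gamma_{M,N}]\rtimes k^{\mathbb Z_M}, \mathrm{id}, \rho_Q)$ realizes the Hopf image. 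By the characterization of inner faithfulness recalled in the section on Hopf images, this means precisely that $\rho_Q$ is inner faithful.

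I do not expect any serious obstacle here, as the corollary is essentially an unwinding of Lemma \ref{lem:hopfimgamma} under the specialization $E_Q=(0)$; the only mild subtlety is the bookkeeping identifying $N_Q \simeq E_Q$ and confirming that the triviality of $U$ forces the whole quotient datum, including $\Phi$, to be trivial. All of these are immediate from the isomorphism $\mathbb Z^{(M-1)(N-1)}/E_Q \simeq T/N_Q$ noted just before the statement.
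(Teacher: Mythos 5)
Your proof is correct and follows essentially the same route as the paper, whose entire argument is the one-liner ``if $E_Q$ is trivial, so is $N_Q$, and the result follows from Lemma \ref{lem:hopfimgamma}''. Your additional bookkeeping --- that $U\subset N_Q=\{1\}$ forces $\Phi$ trivial, that the map $q$ of Proposition \ref{hopfim} then equals the identity, and that this identity factorization being universal is precisely the definition of inner faithfulness --- just makes explicit what the paper leaves to the reader.
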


\begin{proof}
 If $E_Q$ is trivial, so is $N_Q$, and the result follows from Lemma \ref{lem:hopfimgamma}.
\end{proof}

\begin{corollary}
 If $I_Q^0$ is infinite, then the Hopf image of the representation $\rho_Q$ is infinite-dimensional.
\end{corollary}

\begin{proof}
 Again the Hopf image of $\rho_Q$ is isomorphic to 
$k[T/U\rtimes \mathbb Z_N] \rtimes_\Phi k^{\mathbb Z_M}$ for some subgroup $U \subset N_Q$. We have $I_Q^0 \simeq \mathbb Z^{(M-1)(N-1)}/E_Q^0$, so $[\mathbb Z^{(M-1)(N-1)}:E_Q]=[T:N_Q]$ is infinite, as well as $[T:U]$, and we are done.
\end{proof}

We can also prove Theorem \ref{nonroot} now.

\begin{proof}[Proof of Theorem \ref{nonroot}]
The group $I_Q^0$ is infinite by Lemma \ref{lem:roots}, hence by the previous corollary the Hopf image of $\rho_Q$, isomorphic to $k[\Gamma_{M,N}/U] \rtimes_\Phi k^{\mathbb Z_M}$,  is infinite-dimensional. By Theorem \ref{infiquot}, either $U$ is trivial, and we are done, either $k[\Gamma_{M,N}/U] \rtimes_\Phi k^{\mathbb Z_M}$ is cocommutative. In this case the $\mathbb Z_M$-action on $\Gamma_{M,N}/U$ is trivial, and since it permutes cyclically the generators, the quotient group $\Gamma_{M,N}/U$ is finite cyclic, and $k[\Gamma_{M,N}/U] \rtimes_\Phi k^{\mathbb Z_M}$ is finite-dimensional, a contradiction.
\end{proof}

\subsection{Example at small indices} We end the paper with some precise results at small indices $M$ and $N$.
We begin by the case $M=2=N$. 
We have then 
$$Q = \begin{pmatrix} 1 & 1 \\ 1 & q\end{pmatrix} \ \ {\rm and} \ \ \theta = \begin{pmatrix} q^{-1} & q \\ q & q^{-1}\end{pmatrix}$$
       for some $q \in k^*$, and we simply denote $\rho_Q$ by $\rho_q$. 
We retain the notation of the beginning of Section \ref{sec:example}.

\begin{proposition}
Let $A_q$ denote the Hopf image of $\rho_q : k[\Gamma_{2,2}]\rtimes k^{\mathbb Z_2} \rightarrow {\rm End}(k^2)\simeq M_2(k)$, and let $m=o(q)$.
\begin{enumerate}
 \item If $m=\infty$, then $A_q\simeq  k[\Gamma_{2,2}]\rtimes k^{\mathbb Z_2}$.
\item If $m \not  \in 2 \mathbb N$, then $A_q \simeq A(m)$. 
\item If  $m \in 2 \mathbb N$ and $m \not  \in 4 \mathbb N$, then $A_q \simeq A(\frac{m}{2})$. 
\item If $m \in 4\mathbb N$, then $A_q \simeq B(\frac{m}{4})$.
\end{enumerate}
In particular, we have $\dim(A_q)=4o(q^4)$. 
\end{proposition}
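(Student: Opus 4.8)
The plan is to apply Corollary \ref{corohopfim}, computing its subgroup $N_Q$ and cocycle $\Phi$ explicitly from the two matrices generating the image of $\rho_q$. First I would record that, since $\theta_{00}=\theta_{11}=q^{-1}$ and $\theta_{01}=\theta_{10}=q$, the map $\rho_q(g_0\#1)$ sends $\epsilon_0\mapsto q^{-1}\epsilon_1$, $\epsilon_1\mapsto q\epsilon_0$, while $\rho_q(g_1\#1)$ sends $\epsilon_0\mapsto q\epsilon_1$, $\epsilon_1\mapsto q^{-1}\epsilon_0$. Because $\rho_q$ is an algebra map and $a_{11}=g_1g_0$ generates $T\simeq\mathbb{Z}$ (Lemma \ref{lem:gammaMN}), composing these gives $\rho_q(a_{11}^R\#1)=\mathrm{diag}(q^{-2R},q^{2R})$ for every $R\in\mathbb{Z}$.

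Next I would identify $N_Q$. By Lemma \ref{lem:hopfimgamma} every element of $N_Q$ lies in $T$, hence is some $a_{11}^R$; and since $h\cdot a_{11}=a_{11}^{-1}$, both $\rho_q(a_{11}^R\#1)$ and $\rho_q(h\cdot a_{11}^R\#1)=\mathrm{diag}(q^{2R},q^{-2R})$ are scalar exactly when $q^{4R}=1$. Writing $m=o(q)$ and $m_0=m/\gcd(m,4)=o(q^4)$, this says $N_Q=\langle a_{11}^{m_0}\rangle=N_{m_0}$, so $\Gamma_{2,2}/N_Q\simeq D_{m_0}$ and the group part of each case is settled.

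Then I would pin down the cocycle. On the generator the common scalar is $\rho_q(a_{11}^{m_0}\#1)=q^{2m_0}\,\mathrm{id}$ (using $q^{4m_0}=1$), so the unique morphism $\Phi:N_{m_0}\to(k^{\mathbb{Z}_2})^\times$ with $\Phi(a_{11}^{m_0})=\delta_1+q^{2m_0}\delta_h$ satisfies $\rho_q(1\#\Phi(r))=\rho_q(r\#1)$; that is, $(N_{m_0},\Phi)\in\mathcal{E}(\rho_q)$, which is exactly the hypothesis of Corollary \ref{corohopfim}. Thus $A_q\simeq k[\Gamma_{2,2}/N_{m_0}]\rtimes_\Phi k^{\mathbb{Z}_2}$. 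Since $q^{4m_0}=1$ forces $q^{2m_0}=\pm1$, the classification of quotients in the first example of Section \ref{sec:example} shows $\Phi$ is trivial when $q^{2m_0}=1$ (so $A_q\simeq A(m_0)$) and $\Phi=\Phi_{m_0}$ when $q^{2m_0}=-1$ (so $A_q\simeq B(m_0)$).

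Finally the proof reduces to a short case analysis of $q^{2m_0}$ against $m\bmod 4$: for $m=\infty$ one has $N_Q=\{1\}$, so $\rho_q$ is inner faithful (Corollary \ref{cor:vanisheq}) and $A_q$ is the whole algebra; for $m$ odd, $m_0=m$ and $q^{2m}=1$; for $m\equiv2\pmod4$, $m_0=m/2$ and $q^{m}=1$; and for $m\equiv0\pmod4$, $m_0=m/4$ and $q^{m/2}$ has order $2$, hence equals $-1$ as $\mathrm{char}(k)=0$. These yield cases (1)--(4) and $\dim A_q=4m_0=4\,o(q^4)$. I do not expect a serious obstacle; the only point needing care is checking that $q^{2m_0}=\pm1$, so that $\Phi$ genuinely lands in $\widehat{\mathbb{Z}_2}$ and defines a quotient datum, together with keeping the normalization $N_m=\langle(g_0g_1)^m\rangle=\langle a_{11}^m\rangle$ straight so that the index $m_0$ is attached to the correct $A(\cdot)$ or $B(\cdot)$.
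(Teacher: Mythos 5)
Your proposal is correct and matches the paper's own proof in all essentials: both compute $\rho_q(a_{11}\#1)=\mathrm{diag}(q^{-2},q^{2})$, identify $N_Q=\{a_{11}^{k}\ :\ m\mid 4k\}$ via Lemma \ref{lem:hopfimgamma}, exhibit the trivial cocycle or $\Phi_{m_0}$ so that $(N_Q,\Phi)\in\mathcal E(\rho_q)$, and conclude by Corollaries \ref{cor:vanisheq} and \ref{corohopfim}. Your only variation is cosmetic: you unify the paper's three root-of-unity cases through $m_0=o(q^4)=m/\gcd(m,4)$ and read off $\Phi$ from the sign $q^{2m_0}=\pm1$, which is a slightly tidier packaging of the identical argument.
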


\begin{proof}
 (1) follows from Corollary \ref{cor:vanisheq}. We assume now that $q$ is a root of unity. We have, in matrix form
$$\rho_q(a_{11}\#1) = \begin{pmatrix} q^{-2} & 0 \\ 0 & q^{2}\end{pmatrix}$$
Thus the subgroup $N_Q$ in Lemma \ref{lem:hopfimgamma} is formed by the elements $\{a_{11}^k, \  k \in \mathbb Z, \ m|4k\}$. 

(2) Assume that $m \not \in 2\mathbb N$. Then $N_Q=\langle a_{11}^{m}\rangle$. For $\Phi$ the trivial map, we easily see that $(N_Q,\Phi) \in \mathcal E(\rho_q)$, and hence we have $A_q \simeq A(m)$ by Corollary \ref{corohopfim}.

(3) Assume that $m \in 2 \mathbb N$ and $m \not  \in 4 \mathbb N$. Then $N_Q=\langle a_{11}^{\frac{m}{2}}\rangle$. For $\Phi$ the trivial map, we  see that $(N_Q,\Phi) \in \mathcal E(\rho_q)$, and hence we have $A_q \simeq A(\frac{m}{2})$ by Corollary \ref{corohopfim}.

(4) Assume that $m \in 4\mathbb N$. Then $N_Q=\langle a_{11}^{\frac{m}{4}}\rangle$. Consider, as in the beginning of Section \ref{sec:example}, $\Phi_{\frac{m}{4}} : \langle (a_{11})^{\frac{m}{4}} \rangle\simeq \mathbb Z \rightarrow \widehat{\mathbb Z_2} =\langle \chi \rangle$, the unique group morphism with $\Phi_m(a_{11}^{\frac{m}{4}})=\chi$ (recall that $g_0g_1=a_{11}$). It is immediate to check that $(N_Q,\Phi_{\frac{m}{4}}) \in \mathcal E(\rho_q)$, and hence we have $A_q \simeq B(\frac{m}{4})$ by Corollary \ref{corohopfim}.

The last assertion is immediate.
\end{proof}

As a last example, we consider the case $M=3$, $N=2$. We then have  
$$Q = \begin{pmatrix} 1 & 1 \\ 1 & p \\ 1 & q\end{pmatrix} \ \ {\rm and} \ \
\theta = \begin{pmatrix} q^{-1} & q \\ p & p^{-1} \\ qp^{-1} & pq^{-1}\end{pmatrix}$$
If $p$ or $q$ is not a root of unity, we know from Theorem \ref{nonroot} that $\rho_Q$ is inner faithful. In the root of unity case, we have  the following particular result.

\begin{proposition}
 Assume that $p$ and $q$ are roots of unity, and let $m=o(p^2)$ and $n=o(q^2)$. Denote by $A_Q$ the Hopf image of $\rho_Q$. 
\begin{enumerate}
\item 
If 
${\rm GCD}(m,n)=1={\rm GCD}(m,3)={\rm GCD}(n,3)$, then $A_Q$ is isomorphic to a smash coproduct
$$k[(\mathbb Z_{mn}\times \mathbb Z_{mn})\rtimes \mathbb Z_2] \rtimes k^{\mathbb Z_3}$$ 
\item If $p^2=q^2$ and ${\rm GCD}(m,3)=1$, then $A_Q$ is isomorphic to a smash coproduct
$$k[(\mathbb Z_{m}\times \mathbb Z_{m})\rtimes \mathbb Z_2] \rtimes k^{\mathbb Z_3}$$ 
\item If $p^2=q^2$ and $3|m$, then $A_Q$ is isomorphic to a smash coproduct
$$k[(\mathbb Z_{m}\times \mathbb Z_{\frac{m}{3}})\rtimes \mathbb Z_2] \rtimes k^{\mathbb Z_3}$$ 
\end{enumerate}
\end{proposition}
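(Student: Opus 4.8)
The plan is to apply Corollary~\ref{corohopfim}, using the description of the relevant subgroup given in Lemma~\ref{lem:hopfimgamma}. Writing $T=\langle a_{11},a_{21}\rangle\cong\mathbb{Z}^2$ as in Lemma~\ref{lem:gammaMN} and using $\rho_Q(a_{ic}\#1)(\epsilon_d)=\theta_{i,c+d}\theta_{0,c+d}^{-1}\epsilon_d$, I would first record that $\rho_Q(a_{11}\#1)=\mathrm{diag}\big((pq)^{-1},pq\big)$ and $\rho_Q(a_{21}\#1)=\mathrm{diag}\big(pq^{-2},p^{-1}q^2\big)$. Hence for $a=a_{11}^{R_{11}}a_{21}^{R_{21}}$ one gets $\rho_Q(a\#1)=\mathrm{diag}(\alpha(a),\alpha(a)^{-1})$ with the character $\alpha(a)=p^{-R_{11}+R_{21}}q^{-R_{11}-2R_{21}}$, so that $\rho_Q(a\#1)$ is scalar exactly when $\alpha(a)^2=1$, where $\alpha(a)^2=(p^2)^{-R_{11}+R_{21}}(q^2)^{-R_{11}-2R_{21}}$.

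Next I would compute $N_Q=\{a\in T : \alpha(h^l\cdot a)^2=1,\ l=0,1,2\}$ (equal to $E_Q$ in the notation of Lemma~\ref{lem:hopfimgamma}), where $h$ acts on $\mathbb{Z}^2$ via the companion matrix of Lemma~\ref{linearalg} for $p=3$; this action cyclically permutes the three linear forms $-R_{11}+R_{21}$, $2R_{11}+R_{21}$, $-R_{11}-2R_{21}$, whose sum is $0$. If $\gcd(m,n)=1$ then $\langle p^2\rangle\cap\langle q^2\rangle=1$, so the scalar conditions split into congruences modulo $m=o(p^2)$ and $n=o(q^2)$; running over $l$ and using $\gcd(mn,3)=1$ forces $R_{11}\equiv R_{21}\equiv0\pmod{mn}$, i.e. $T/N_Q\cong\mathbb{Z}_{mn}\times\mathbb{Z}_{mn}$. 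If $p^2=q^2$ (so $m=n$) the conditions reduce to $R_{21}\equiv R_{11}$ and $3R_{11}\equiv0\pmod m$, and a Smith-normal-form computation gives $T/N_Q\cong\mathbb{Z}_m\times\mathbb{Z}_m$ when $\gcd(m,3)=1$ and $T/N_Q\cong\mathbb{Z}_m\times\mathbb{Z}_{m/3}$ when $3\mid m$. In all cases $N_Q$ is normal and $\mathbb{Z}_3$-stable with $\Gamma_{3,2}/N_Q\cong(T/N_Q)\rtimes\mathbb{Z}_2$, which reproduces the three displayed groups.

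To conclude via Corollary~\ref{corohopfim} I must produce $\Phi:N_Q\to(k^{\mathbb{Z}_3})^\times$ with $(N_Q,\Phi)\in\mathcal{E}(\rho_Q)$; the condition $\rho_Q(r\#1)=\rho_Q(1\#\Phi(r))$ forces $\Phi(r)(h)=\alpha(r)$, and the quotient-datum axioms then give $\Phi(r)=\delta_1+\alpha(r)\delta_h+\alpha(r)\alpha(h^2\cdot r)\delta_{h^2}$. Here lies the main obstacle: the scalars $\alpha$ are only forced into $\{\pm1\}$ on $N_Q$, and $\alpha$ genuinely does attain $-1$ there (for instance on $a_{11}^m$ when $q=-p$ and $m$ is odd, or on the generator $a_{21}^m$ when $p=q$ and $o(p)=2m$ in the case $3\mid m$). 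Thus $\Phi$ need not be trivial and $\rho_Q$ does not factor through the plain smash coproduct; the real content of the proposition is that the twisted Hopf algebra $k[(T/N_Q)\rtimes\mathbb{Z}_2]\rtimes_\Phi k^{\mathbb{Z}_3}$ is nevertheless isomorphic, as a Hopf algebra, to the untwisted one.

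I expect to resolve this by a gauge transformation in the spirit of the section-independence isomorphism in Proposition~\ref{construction}. Since $\alpha:T\to k^\times$ is a character of the whole of $T$, the formula $\widetilde\Phi(a)=\delta_1+\alpha(a)\delta_h+\alpha(a)\alpha(h^2\cdot a)\delta_{h^2}$ defines a group morphism $\widetilde\Phi:T\to(k^{\mathbb{Z}_3})^\times$ extending $\Phi$; crucially it satisfies the quotient-datum relations on all of $T$ because $\alpha(a)\alpha(h\cdot a)\alpha(h^2\cdot a)=\alpha\big((1+h+h^2)\cdot a\big)=1$, the operator $1+h+h^2$ vanishing on $T$ by the Cayley--Hamilton relation for the companion matrix of Lemma~\ref{linearalg}. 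Fixing a section $j$, the scalar map $F(u(r)\#\delta_h)=\widetilde\Phi(ju(r))(h)\,(u(r)\#\delta_h)$ is then a Hopf algebra isomorphism from $k[(T/N_Q)\rtimes\mathbb{Z}_2]\rtimes_\Phi k^{\mathbb{Z}_3}$ onto the plain smash coproduct: the multiplicative twist $\Phi(ju(r)ju(s)ju(rs)^{-1})$ is straightened because $\widetilde\Phi$ is multiplicative, and the same gauge straightens the comultiplicative twist $\Phi(l\cdot ju(r)ju(l\cdot r)^{-1})$ by the quotient-datum relations for $\widetilde\Phi$. Granting these two verifications, Corollary~\ref{corohopfim} identifies $A_Q$ with $k[(T/N_Q)\rtimes\mathbb{Z}_2]\rtimes k^{\mathbb{Z}_3}$, the asserted smash coproduct in each of the three cases.
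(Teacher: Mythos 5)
Your computation of $N_Q$ in the three cases, and of the quotients $T/N_Q$, agrees with the paper's proof. At the next step, however, the paper does something much simpler than you do: it displays the matrices $\rho_Q(a_{11}\#1)=\mathrm{diag}(p^{-2},q^2)$ and $\rho_Q(a_{21}\#1)=\mathrm{diag}(p^{2},q^{4})$, for which every element of $N_Q$ is represented by the identity, takes $\Phi$ trivial, and applies Corollary~\ref{corohopfim} directly. Your matrices $\mathrm{diag}\bigl((pq)^{-1},pq\bigr)$ and $\mathrm{diag}\bigl(pq^{-2},p^{-1}q^{2}\bigr)$ are the ones that actually follow from the paper's own conventions (the formula $\rho_Q(a_{ic}\#1)(\epsilon_d)=\theta_{i,c+d}\theta_{0,c+d}^{-1}\epsilon_d$ of Lemma~\ref{lem:hopfimgamma} applied to the displayed $\theta$); they yield the same congruence conditions, hence the same $N_Q$, but the scalar on $N_Q$ is then only $\pm1$, and your examples where it equals $-1$ are valid (e.g.\ $q=-p$, $m$ odd, in case (2)). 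So your diagnosis is correct and you have in fact uncovered a computational slip in the paper's proof: with the correct matrices the trivial $\Phi$ need not lie in $\mathcal E(\rho_Q)$, and Corollary~\ref{corohopfim} only gives $A_Q\simeq k[(T/N_Q)\rtimes\mathbb Z_2]\rtimes_\Phi k^{\mathbb Z_3}$ with a possibly nontrivial sign-valued $\Phi$; some further argument is genuinely needed to reach the untwisted smash coproduct of the statement.

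Unfortunately your repair has a real gap precisely where you write ``granting these two verifications.'' The gauge $F(u(r)\#\delta_l)=\widetilde\Phi(ju(r))(l)\,(u(r)\#\delta_l)$ requires $\widetilde\Phi$ at section values $ju(r)$ in the coset $Tg_0$ of $\Gamma_{3,2}=T\rtimes\mathbb Z_2$, where it is undefined; and $\widetilde\Phi$ admits \emph{no} multiplicative extension to $\Gamma_{3,2}$: a morphism into the abelian group $(k^{\mathbb Z_3})^\times$ is conjugation-invariant, whereas $g_0 a_{i1} g_0^{-1}=a_{i1}^{-1}$ sends $\alpha$ to $\alpha^{-1}$, and $\alpha$ is not $\{\pm1\}$-valued on all of $T$. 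The naive extension $\widetilde\Phi(tg_0^{\epsilon}):=\widetilde\Phi(t)$ does not help: take $p=1$, $q=-1$ (case (2) with $m=1$, so $N_Q=T$ and $\Gamma_{3,2}/N_Q\simeq\mathbb Z_2$); then $\Phi(a_{11})=\delta_1-\delta_h-\delta_{h^2}$, $\Phi(a_{21})=\delta_1+\delta_h-\delta_{h^2}$, the comultiplication twist on $\bar g_0\#\delta_l$ involves exactly these values (since $h\cdot g_0=g_0a_{11}^{-1}$, $h^2\cdot g_0=g_0a_{21}^{-1}$ and $g_0$-conjugation inverts the $a_{i1}$), while your $F$ is the identity on that coset, so $(F\otimes F)\Delta_\Phi\neq\Delta F$. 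A trivializing gauge does exist in this example, but it is not of your form: the non-multiplicative $c(\bar g_0,\cdot)=\delta_1-\delta_h+\delta_{h^2}$ works, and in general one expects trivializability because the twist is $\{\pm1\}$-valued while $G=\mathbb Z_3$ has odd order, so the relevant obstruction is killed both by $2$ and by $3$; but producing such a gauge (on all cosets, compatibly with both the multiplicative and comultiplicative twists of Proposition~\ref{construction}) is exactly the missing content, and your specific $F$ provably fails it.
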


\begin{proof}
 In matrix form, we have 
$$\rho_q(a_{11}\#1) = \begin{pmatrix} p^{-2} & 0 \\ 0 & q^{2}\end{pmatrix}, \ 
\rho_q(a_{21}\#1) = \begin{pmatrix} p^{2} & 0 \\ 0 & q^{4}\end{pmatrix}$$
Hence the group $N_Q$ consists of elements $a_{11}^\alpha a_{21}^{\beta}$ for which we have
$$(p^2)^{-\alpha +\beta}=(q^2)^{\alpha + 2 \beta}, \ (p^2)^{2 \alpha +\beta}=(q^2)^{\alpha -\beta}$$

(1) Our assumptions imply that $N_Q$ consists of elements $a_{11}^\alpha a_{21}^{\beta}$ with $\alpha, \beta \in mn \mathbb Z$. Taking $\Phi : N_Q \rightarrow k^{\mathbb Z_3}$ the trivial map, we see that $(N_Q,\Phi) \in \mathcal E(\rho)$, and we conclude by Corollary \ref{corohopfim}.

(2) Our assumptions imply that $N_Q$ consists of elements $a_{11}^\alpha a_{21}^{\beta}$ with $\alpha, \beta \in m \mathbb Z$, and we conclude as in the previous case.

(3) Here our assumption  imply that $N_Q$ consists of elements $a_{11}^\alpha a_{21}^{\beta}$ with $$\alpha, \beta \in \{(-2k\frac{m}{3}+ml, k\frac{m}{3}), \ k,l \in \mathbb Z\}=\mathbb Z(m,0) +\mathbb Z(-2\frac{m}{3},\frac{m}{3})=E_Q\subset \mathbb Z^2$$
We then have $T/N_Q\simeq \mathbb Z^2/E_Q\simeq \mathbb Z_{m}\times \mathbb Z_{\frac{m}{3}}$ (by the standard theory of finitely generated abelian groups), and we conclude as in the previous cases.
\end{proof}

\end{document}